\newtheorem{thm}{Theorem}
\newtheorem{lemma}[thm]{Lemma}
\def\D{\mathbb{D}}
\def\T{\partial \D}
\def\C{\mathbb{C}}
\def\f{\frac}
\def\a{\alpha}
\def\msk{\medskip}
\def\bege{\begin{equation}} \def\ende{\end{equation}}
  \def\a{\alpha}
\def\begr{\begin{eqnarray}} \def\endr{\end{eqnarray}}
\def\bege{\begin{equation}} \def\ende{\end{equation}}
\def\begr{\begin{eqnarray}} \def\endr{\end{eqnarray}}
\def\bnum{\begin{enumerate}} \def\enum{\end{enumerate}}
\begin{document}

\title[Difference of composition-differentiation operators]{Difference of composition-differentiation operators from Hardy spaces  to weighted Bergman spaces via harmonic analysis}
\author{Yecheng Shi and Songxiao Li$^*$  }

\address{Yecheng Shi\\    School of Mathematics and Statistics, Lingnan Normal University,
 Zhanjiang 524048, Guangdong, P. R. China}\email{ 09ycshi@sina.cn}

 \address{Songxiao Li\\ Institute of Fundamental and Frontier Sciences, University of Electronic Science and Technology of China,
610054, Chengdu, Sichuan, P. R. China.   } \email{jyulsx@163.com}

\subjclass[2010]{30H10, 30H20, 47B38}
\begin{abstract}  In this paper, the boundedness and compactness of the difference of composition-differentiation operators $D_\varphi-D_\psi$ acting from Hardy spaces $H^p$ to weighted Bergman spaces $A^q_\alpha$ are completely characterize for all $0<p,q<\infty$.
\thanks{*Corresponding author.}
\vskip 3mm \noindent{\it Keywords}: Hardy space; weighted Bergman space; composition operator; difference;  norm.
\end{abstract}

\maketitle

\section{Introduction}

Let $\D$ and $\partial\D$ denote the open unit disk and the unit circle of the complex plane $\C$, respectively. In the sequel, $dm=\frac{d\theta}{2\pi}$  will be the normalized Lebesgue measure on $\partial\D$. The Lebesgue space $L^p(\partial\D,m)$ will also be denoted by $L^p(\partial\D)$.  We denote by $H(\D)$ the class of all functions analytic in $\D$.

For $0<p<\infty,$ let $H^p$ be the classical Hardy space of all  $f \in H(\D)$  such that
$$\|f\|_{p}^p=\sup_{0<r<1}\frac{1}{2\pi}\int_{0}^{2\pi}|f(re^{i\theta})|^pd\theta<\infty.$$

 Given $\alpha>-1$ and $0<p<\infty$, a function $f\in H(\D)$ belongs to the weighted Bergman spaces $A^p_\alpha$, if
$$\|f\|_{A^p_\alpha}^p=\int_{\D}|f(z)|^pdA_\alpha(z)<\infty.$$
Here $dA$ is the Lebesgue measure on $\D$, normalized so that $A(\D)=1$. The measure $dA_\alpha$ is given by $dA_\alpha=(\alpha+1)(1-|z|^2)^\alpha dA(z)$.
  We refer the reader to \cite{Duren,DS,Ga, Zhu,zhu} for more information of Hardy spaces and   weighted Bergman spaces.

Let $\varphi$ be an analytic self-map of $\D$. The composition operator $C_\varphi$ on $H(\D)$ is defined by
$$C_\varphi f=f\circ\varphi, ~~~~f\in H(\D).$$
Let $D_\varphi$  denote the operator given by
$$D_\varphi f=f^\prime\circ\varphi, ~~~~f\in H(\D).$$
It has been of growing interest to study the difference of composition operators for the last three decades. The question of when the difference of two composition operators is compact on Hardy spaces $H^p$ was posed by Shapiro and Sundberg \cite{SS} in 1990.
The difference of composition operators has been studied  by many authors
on several function spaces. See \cite{G,KW,LS,LRW,SL,SLD, SQL} for example.
In 2004, Nieminen and Saksman \cite{NS} showed that the compactness of $C_\varphi-C_\psi$ on   $H^p$, with $1\leq p<\infty$, is independent of $p$.  Very recently, in \cite{CCKY}, Choe et al. completely characterized compact operators $C_\varphi-C_\psi$ on $H^p$ by using Carleson measures of Bergman spaces.
  Moorhouse \cite{Mo} characterized the compactness of $C_\varphi-C_\psi$ on the standard weighted
Bergman space $A^2_\alpha$. Saukko \cite{S1,S2} generalized Moorhouse's results by
characterizing the  boundedness and compactness from $A^p_\alpha$ to $A^q_\beta$. But Saukko's proof
contains some serious problems. Very recently, in \cite{LSS}, Lindstr\"om, Saukko and the first author of this paper have corrected Saukko's proof.

In the context of analytic functions on $\D$, it also seems reasonable to consider the difference of composition-differentiation operators.
The aim of this paper is to characterize the boundedness and compactness of the difference of composition-differentiation operators $D_\varphi-D_\psi:H^p\to A^q_\alpha$. The Littlewood-Paley formula of standard Bergman spaces
$$\|f\|_{A^p_\alpha}\asymp\sum_{j=0}^{n-1}|f^{(j)}(0)|+\|f^{(n)}\|_{A^p_{\alpha+np}}$$
implies that the analogous problem for $D_\varphi-D_\psi:A^p_\alpha\to A^q_\beta$ is equivalent to characterize the boundedness and compactness of the difference of composition operators $C_\varphi-C_\psi:A^p_{\alpha+p}\to A^q_\beta$, which  have been solved in \cite{LSS,Mo,S1,S2}. However, this method does not work for our setting, because such a Littlewood-Paley formula does not exist for Hardy spaces when $p\neq2$.

Let us now recall some definitions that will enable us to state the solution to the primary question of this paper.
For $a\in\D$, let $\sigma_{a}(z) =\frac{a-z}{1-\bar{a}z}$ be the disc automorphism that exchanges 0 for $a$.
For two points $z$, $w\in\D$, the pseudo-hyperbolic distance is given by
$\rho(z,w)=|\sigma_{w}(z)|=\big|\frac{z-w}{1-\bar{w}z}\big|.$ We denote $\Delta(a,r) = \{z \in\D: |\sigma_a(z)| < r\}$ by the pseudohyperbolic disk centered at $a$ with radius $r$.
Given two holomorphic self-maps $\varphi, \psi$ of $\D$, we put
$$\rho(z)=\rho(\varphi(z),\psi(z))$$
for short.  For $0<p,q<\infty$ and $-1<\alpha<\infty$,   the joint pull-back measure
$ \mu$ is defined by
\begr\mu(E):=\mu_{q,\alpha,\varphi,\psi}(E)=\int_{\varphi^{-1}(E)}\rho(z)^qdA_\alpha(z)+\int_{\psi^{-1}(E)}\rho(z)^qdA_\alpha(z)\label{m}\endr
for any Borel set $E\subset\D$.  Here and henceforth   $\mu$ denotes the join pull-back measure defined as above.
$\mu$ is actually the sum of two pull-back measures
$\rho^qdA_\alpha\circ\varphi^{-1}$ and $\rho^qdA_\alpha\circ\psi^{-1}$. By a standard argument one can verify that
\begin{equation} \label{gs}
\int_{\D}gd\mu=\int_{\D}(g\circ\varphi+g\circ\psi)\rho^q dA_\alpha
\end{equation}
for any positive Borel function $g$ on $\D$.

We state our main result in this paper as follows.\msk

\begin{thm}\label{main}
 Suppose $\varphi$ and $\psi$ are analytic self-maps of $\D$.  Let $0<r<1$, $0<p,q< \infty$, and let $\mu$ denote the joint pull-back measure  induced by $\varphi$ and $\psi$ defined as (\ref{m}).\\
(i) If either $0<p<q< \infty$ or $2\leq p=q$, then \msk

(ia) $D_\varphi-D_\psi:H^p\to A^q_\alpha$ is bounded if and only if
\begr\sup_{z\in\D}\frac{\mu(\Delta(z,r))}{(1-|z|^2)^{\frac{q}{p}+q}}<\infty.\nonumber
\endr\msk

(ib) $D_\varphi-D_\psi:H^p\to A^q_\alpha$ is compact if and only if
\begr\lim_{|z|\to1}\frac{\mu(\Delta(z,r))}{(1-|z|^2)^{\frac{q}{p}+q}}=0.\nonumber\endr
(ii)~If $q<\min\{2,p\}$, then the following statements are equivalent:

(iia) $D_\varphi-D_\psi:H^p\to A^q_\alpha$ is bounded;

(iib) $D_\varphi-D_\psi:H^p\to A^q_\alpha$ is compact;

(iic) The function \begr\zeta\mapsto\left(\int_{\Gamma(\zeta)}\left(\frac{\mu(\Delta(z,r))}{(1-|z|^2)^{1+q}}\right)^{\frac{2}{2-q}}\frac{dA(z)}{(1-|z|)^2}\right)^{\frac{2-q}{2}}\nonumber
\endr
\par belongs to $L^{\frac{p}{p-q}}(\partial\D,m)$.\\
(iii)~~~If~$0<q=p<2$, then

(iiia) $D_\varphi-D_\psi:H^p\to A^q_\alpha$ is bounded if and only if
\begr \left(\frac{\mu(\Delta(z,r))}{(1-|z|^2)^{1+p}}\right)^{\frac{2}{2-p}}\frac{dA(z)}{1-|z|} ~~ \nonumber \endr
  is a Carleson measure.

(iiib) $D_\varphi-D_\psi:H^p\to A^q_\alpha$ is compact if and only if
\begr \left(\frac{\mu(\Delta(z,r))}{(1-|z|^2)^{1+p}}\right)^{\frac{2}{2-p}}\frac{dA(z)}{1-|z|} ~~ \nonumber \endr
 is a  vanishing Carleson measure.\\
(iv)~If $2\leq q<p<\infty$, then

(iva) $D_\varphi-D_\psi:H^p\to A^q_\alpha$ is bounded if and only if the function
 \begr
\zeta\mapsto\sup_{z\in\Gamma(\zeta)}\frac{\mu(\Delta(z,r))}{(1-|z|^2)^{1+q}}\nonumber
\endr
  belongs to $L^{\frac{p}{p-q}}(\partial\D,m)$.

(ivb) $D_\varphi-D_\psi:H^p\to A^q_\alpha$ is compact if and only if the function
 \begr
\zeta\mapsto\sup_{z\in\Gamma(\zeta)\cap\{|z|\geq r\}}\frac{\mu(\Delta(z,r))}{(1-|z|^2)^{1+q}}\nonumber
\endr
\par converges to zero in $L^{\frac{p}{p-q}}(\partial\D,m)$ as $r\to1^-$.
\end{thm}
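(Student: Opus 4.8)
The idea is to decouple the two features of $D_\varphi-D_\psi$: the ``difference'' feature, responsible for the pseudohyperbolic weight $\rho$ and hence for the joint pull-back measure $\mu$, and the ``$H^p\to A^q_\alpha$ embedding'' feature, responsible for the four-fold case split. For the first feature one needs a pointwise estimate: if $a,b\in\D$ and $\rho(a,b)\le r$, then applying Cauchy's inequality to the analytic function $f'\circ\sigma_a$ on a disc of radius slightly bigger than $r$ gives
\[
|f'(a)-f'(b)|\ \lesssim\ \rho(a,b)\,\sup_{w\in\Delta(a,R)}|f'(w)|
\]
for a suitable $R=R(r)\in(0,1)$, while on the set $\{z\in\D:\rho(z)>r\}$ one simply uses $|f'(\varphi(z))-f'(\psi(z))|\le|f'(\varphi(z))|+|f'(\psi(z))|\asymp\rho(z)\big(|f'(\varphi(z))|+|f'(\psi(z))|\big)$. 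Substituting into $\|(D_\varphi-D_\psi)f\|_{A^q_\alpha}^q=\int_\D|f'(\varphi(z))-f'(\psi(z))|^q\,dA_\alpha(z)$, passing from the supremum to a pseudohyperbolic average by subharmonicity of $|f'|^q$, using the pull-back identity (\ref{gs}), and applying Fubini's theorem, one arrives at
\[
\|(D_\varphi-D_\psi)f\|_{A^q_\alpha}^q\ \lesssim\ \int_\D|f'(w)|^q\,d\lambda(w),\qquad d\lambda(w)=\frac{\mu(\Delta(w,R'))}{(1-|w|^2)^2}\,dA(w),
\]
for some $R'\in(0,1)$. A routine covering argument shows every condition in the theorem is independent of $r\in(0,1)$, so radii may be adjusted freely and $\lambda(\Delta(z,r))$ may be replaced by $\mu(\Delta(z,r))$ throughout.

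Thus each listed condition will be \emph{sufficient} for boundedness once we know which positive measures $\lambda$ on $\D$ satisfy $\int_\D|f'|^q\,d\lambda\lesssim\|f\|_{H^p}^q$ for every $f\in H^p$ --- a Carleson-measure problem for the space $\{f':f\in H^p\}$. By the Lusin area characterization of $H^p$ (equivalently, $f\in H^p$ if and only if $(1-|\cdot|^2)f'$ lies in the tent space $T^p_2$, with comparable norms) this becomes the tent-space embedding $T^p_2\hookrightarrow L^q(\widetilde\lambda)$ with $d\widetilde\lambda=(1-|\cdot|^2)^{-q}d\lambda$, and the known descriptions of Carleson measures for $T^p_2$ split precisely along our ranges: a box condition $\widetilde\lambda(\Delta(z,r))\lesssim(1-|z|^2)^{q/p}$ when $p<q$ or $p=q\ge2$; the $L^{p/(p-q)}(\partial\D)$ membership of $\zeta\mapsto\sup_{z\in\Gamma(\zeta)}\widetilde\lambda(\Delta(z,r))/(1-|z|)$ when $2\le q<p$; the $L^{p/(p-q)}$ membership of the $\tfrac{2}{2-q}$-averaged tent functional when $q<\min\{2,p\}$; and the Carleson-measure property of $\big(\widetilde\lambda(\Delta(z,r))/(1-|z|)\big)^{2/(2-p)}(1-|z|)^{-1}\,dA(z)$ when $q=p<2$. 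Since here $\widetilde\lambda(\Delta(z,r))\asymp\mu(\Delta(z,r))/(1-|z|^2)^{q}$, substituting and using radius-independence turns these respectively into (ia), (iva), (iic), (iiia); the compactness statements follow identically from the ``little-oh'' forms of the same embeddings. In particular, when $q<\min\{2,p\}$ the governing functional is an \emph{integral} over the Stolz cone $\Gamma(\zeta)$, so finiteness of its $L^{p/(p-q)}$-norm forces all its boundary truncations to vanish --- which is exactly why (iia), (iib) and (iic) coincide in that regime, whereas in the others boundedness and compactness genuinely differ.

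For the \emph{necessity} half one tests $D_\varphi-D_\psi$ on the normalized Hardy-space kernels $f_a(z)=\big((1-|a|^2)/(1-\bar a z)^2\big)^{1/p}$, which have $\|f_a\|_{H^p}\asymp1$ and $|f_a'(w)|\asymp(1-|a|^2)^{-1-1/p}$ on $\Delta(a,r)$, together with a suitably chosen companion $\tilde f_a$ (for instance $\tilde f_a=\sigma_a f_a$): on configurations where $\varphi(z)$ and $\psi(z)$ are pseudohyperbolically close one recovers $|(D_\varphi-D_\psi)f_a(z)|^q+|(D_\varphi-D_\psi)\tilde f_a(z)|^q\gtrsim\rho(z)^q(1-|a|^2)^{-q/p-q}$ from a non-degeneracy estimate for the pair $(f_a',\tilde f_a')$, while configurations where $\varphi(z)$ and $\psi(z)$ are separated are handled by a case analysis according to the position of $\psi(z)$ relative to the Carleson box with vertex $a$; summing, restricting the defining integrals to $\varphi^{-1}(\Delta(a,r))\cup\psi^{-1}(\Delta(a,r))$ and invoking (\ref{gs}) yields $\mu(\Delta(a,r))\lesssim\|D_\varphi-D_\psi\|^q(1-|a|^2)^{q/p+q}$, i.e.\ (ia); in the ranges $q<p$ one tests instead against randomized sums $\sum_k\varepsilon_k c_k f_{a_k}$ over an $r$-lattice and uses Khinchine's inequality in the style of Luecking to reach the $L^{p/(p-q)}$-conditions, and for compactness one lets $|a|\to1$, using $f_a\to0$ uniformly on compacta. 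The principal obstacles are, first, importing (or reproving) the Carleson-measure characterizations of tent spaces in the delicate regimes $p<2$ and $q<p$, where the box condition fails and one must pass through atomic or randomized arguments; and second, the test-function lower bounds, especially the separated-configuration case, whose entire purpose is to defeat the possible cancellation $f'(\varphi(z))\approx f'(\psi(z))$ and genuinely recover the weight $\rho^q$ built into $\mu$.
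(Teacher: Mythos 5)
Your proposal follows essentially the same route as the paper: split the integral according to the size of $\rho(z)$, use a local oscillation estimate to reduce boundedness to an embedding of derivatives of $H^p$ functions into $L^q(\lambda)$ with $d\lambda(w)=\mu(\Delta(w,r))(1-|w|^2)^{-2}dA(w)$, resolve that embedding through the tent-space/Carleson characterizations in the four ranges, and obtain necessity by testing on reproducing kernels together with a companion function satisfying a non-degeneracy lower bound, plus Luecking-style Khinchine randomization over a lattice when $q<p$. The one point where you diverge is the choice of companion: the paper uses the dilated kernel $k_{a_N}(z)=\big((1-|a|^2)/(1-t_N\bar a z)^2\big)^{1/p}$ with $t_N=1-N(1-|a|)$, for which the required lower bound is exactly the Koo--Wang inequality (Lemma~\ref{KE}), whereas your suggested $\tilde f_a=\sigma_a f_a$ would need its own verification of the analogous estimate for the pair of derivatives, which is not automatic since $(\sigma_a f_a)'$ mixes $f_a$ and $f_a'$.
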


The present paper is organized as follows. In Section 2, we collect some notations and standard facts that will be used in Section 3. In  Section 4, we give the proof of Theorem 1. For two quantities $A$ and $B$, we use the abbreviation
$A\lesssim B$ whenever there is a positive constant $c$ (independent of the associated variables) such that $A\leq cB$.
We write $A\asymp B$, if $A\lesssim B\lesssim A$. Given $p\in[1,\infty]$, we will denote by $p' = p/(p-1)$ its H\"older conjugate. In this paper we
agree that $1'=\infty$ and $\infty '=1$.\msk

\section{prerequisites}

In this section, we collect some well-known results that will be used throughout the paper.\msk

\subsection{Carleson measures}

We recall that a positive Borel measure $\mu$ on $\D$ is a Carleson measure if there is a positive constant $C>0$ such that
$$\int_{\D}|f(z)|^pd\mu(z)\leq C\|f\|_{H^p}^p$$
for any $f\in H^p$.
It is well known that, $\mu$ on $\D$ is a Carleson measure if and only if there exists a constant $C>0$ such that
$$\mu(Q_r(\zeta))\leq Cr$$
for all $\zeta\in\partial\D$ and $r>0$, where $Q_r(\zeta)=\{z\in\D:|1-\overline{\zeta}z|<r\}$ is called a Carleson region at $\zeta$. Obviously every Carleson measure is finite.  We say that a positive finite Borel measure $\mu$ is a vanishing Carleson measure if for any sequence $\{f_k\}$ in $H^p$ with $\|f\|_{H^p}\leq1$ and $f_k\to0$ uniformly on any compact subset of $\D$,
$$\lim_{k\to\infty}\int_{\D}|f_k(z)|^pd\mu(z)=0.$$
It is well known that, a positive finite Borel measure $\mu$ is a vanishing Carleson measure if and only if
\begr
\lim_{r\to0}\frac{\mu(Q_r(\zeta))}{r}=0\nonumber
\endr
uniformly for $\zeta\in\D$. We refer the reader to \cite{Duren,Ga, Zhu,zhu} for more information of  Carleson measure and vanishing Carleson measure.

\subsection{Separated sequences and lattices}

Recall that a sequence $\{z_k\}\subset\D$ is called a $\delta$-separated sequence if there exists $\delta>0$ such that $\rho(z_i,z_j)\geq\delta$ for all $i$ and $j$ with $i\neq j$. A sequence $\{z_k\}\subset\D$ is called a $\delta$-lattice in the pseudo-hyperbolic metric if it is $\delta$-separated and $\D=\bigcup\limits_{k=0}^{\infty}\Delta(z_k,\delta)$. The following result is similar with \cite[Lemma 4]{Lu4}.

\begin{lemma}\label{e}  Assume $0<r<1$. Then there exists a  $\delta$-lattice $\{z_k\}$ such that $\inf_k|z_k|\geq r$ for some $0<\delta<1$.
\end{lemma}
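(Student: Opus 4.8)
The plan is to construct the lattice greedily and then enlarge the "hole" near the origin. First I would take any maximal $\delta$-separated sequence $\{w_k\}$ in $\D$ with respect to the pseudo-hyperbolic metric: such a sequence exists by Zorn's lemma, and by maximality every point of $\D$ lies within pseudo-hyperbolic distance $\delta$ of some $w_k$, i.e. $\D=\bigcup_k\Delta(w_k,\delta)$. This is the standard construction (cf. \cite[Lemma 4]{Lu4}) and gives a $\delta$-lattice, but with no control on $\inf_k|w_k|$. The remaining issue is purely about the behaviour near $0$.

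The key observation is that the pseudo-hyperbolic metric is M\"obius invariant, so we are free to modify the part of the lattice that sits in a fixed hyperbolic neighbourhood of the origin. Fix the desired $r\in(0,1)$. Choose $s\in(r,1)$ and then pick $\delta\in(0,1)$ small enough that the pseudo-hyperbolic ball $\Delta(a,\delta)$ is contained in the Euclidean disk $\{|z|<s\}$ whenever $|a|\le r$; this is possible because $\Delta(a,\delta)\subset\{z:|z|<\frac{|a|+\delta}{1+|a|\delta}\}$ and the right-hand side is bounded by a quantity $<1$ uniformly for $|a|\le r$ and tends to $|a|$ as $\delta\to0$. With such a $\delta$ fixed, start again from a maximal $\delta$-separated sequence $\{w_k\}$ and discard every point with $|w_k|<r$; call the remaining points $\{z_k\}$. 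They are still $\delta$-separated (a subsequence of a separated sequence), and by construction $\inf_k|z_k|\ge r$.

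It remains to check that $\{z_k\}$ still covers $\D$. Every point $z\in\D$ was covered by some $\Delta(w_k,\delta)$ in the original lattice; if that $w_k$ was kept we are done, so suppose $|w_k|<r$, meaning $z\in\Delta(w_k,\delta)$ with $|w_k|<r$. By the choice of $\delta$ this forces $|z|<s$, so the only points possibly left uncovered lie in the compact set $\overline{\{|z|\le s\}}$. To cover this set with pseudo-hyperbolic balls of radius $\delta$ centered at points of modulus $\ge r$, note that the annulus $\{r\le |z|\le s\}$ (or any slightly larger compact annulus inside $\D$) can be covered by finitely many such balls by a routine compactness argument — e.g. cover it by finitely many balls $\Delta(\zeta_j,\delta)$ with $|\zeta_j|=t$ for some $t\in(r,1)$ chosen close enough to $1$ — and we simply adjoin these finitely many centers $\zeta_j$ to $\{z_k\}$. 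Adding finitely many points of modulus $\ge r$ keeps $\inf_k|z_k|\ge r$; separation may fail among the adjoined points or between them and the old ones, but one then thins the adjoined finite set and shrinks $\delta$ if necessary to restore $\delta$-separation while keeping the covering property. The only mildly delicate point is this bookkeeping of separation versus covering near the boundary of the removed region, which is handled exactly as in the proof of \cite[Lemma 4]{Lu4}; everything else is soft.
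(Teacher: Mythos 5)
There is a genuine gap, and it is concentrated in the one place you yourself flag as ``mildly delicate.'' The covering requirement $\D=\bigcup_k\Delta(z_k,\delta)$ must hold for the \emph{same} $\delta$ as the separation, and in particular the origin must lie in some $\Delta(z_k,\delta)$ with $|z_k|\geq r$; since $\rho(0,z_k)=|z_k|\geq r$, this forces $\delta>r$. This is incompatible with two moves in your argument: (1) you choose $\delta$ ``small enough'' that $\Delta(a,\delta)\subset\{|z|<s\}$ for $|a|\le r$ (this is consistent with $\delta>r$ only if $s$ is taken larger than $\frac{2r}{1+r^2}$, a constraint you never impose), and (2) your final step ``shrinks $\delta$ if necessary to restore $\delta$-separation,'' which would destroy the covering of the central disk. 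Relatedly, after discarding the centers of modulus $<r$ you only propose to re-cover the \emph{annulus} $\{r\le|z|\le s\}$ by finitely many balls centered at modulus $\ge r$; the inner disk $\{|z|<r\}$, which also lost its covering, is never handled, and it is precisely there that the constraint $\delta>r$ bites. Finally, the thinning of the adjoined finite set to restore separation can itself break the covering you just arranged, so the patch-and-repair loop is not closed.

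For comparison, the paper avoids all of this by fixing $\delta=\frac{2r}{1+r^2}>r$ at the outset, seeding the construction with a point $z_1$ of modulus exactly $r$ (so that $\Delta(z_1,\delta)\ni 0$), and then running a greedy selection that by design only ever produces points outside $\bigcup_{j<k}\Delta(z_j,\delta)$; the identity $\frac{\delta-r}{1-\delta r}=r$ then gives $|z_k|\ge r$ automatically, and the covering is proved by a finite-overlap argument rather than by surgery on a pre-existing lattice. Your approach could be repaired along the same lines (take $\delta=\frac{2r}{1+r^2}$, take $s$ between $\frac{2r}{1+r^2}$ and $1$, and cover all of $\{|z|\le s\}$ including the center), but as written the choice ``$\delta$ small'' and the annulus-only re-covering do not yield a $\delta$-lattice.
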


\begin{proof} Denote $\delta=\frac{2r}{1+r^2}$. Let $z_1$ be any point with $\rho(z_1,0)=r$, and let $z_2$ be any point with $\rho(z_1,z_2)=\delta.$ Given $z_1,z_2,...,z_{k-1}$, select $z_k\notin\bigcup_{j=1}^{k-1}\Delta(z_j,\delta)$ such that $\rho(z_1,z_k)$ is minimized. This defines the sequence $\{z_k\}$ inductively. Clearly $\rho(z_k,z_n)\geq \delta$ for any $k\neq n$ and $\inf_k|z_k|\geq r$.
In fact, $$|z_k|=\rho(0,z_k)\geq\frac{\rho(z_1,z_k)-\rho(z_1,0)}{1-\rho(z_1,z_k)\rho(z_1,0)}\geq\frac{\delta-r}{1-\delta r}= r,$$
for any $k\neq 1$ and $|z_1|=r$.

We now show that $\D=\bigcup_{k=0}^{\infty}\Delta(z_k,\delta).$ We will do this by using a proof by contradiction.
Assume that $z \notin \cup_{k=1}^{\infty}\Delta(z_k,\delta), z\in\D$. Denote $s=\rho(z_1,z)$ and $\lambda=\frac{2s}{1+s^2}$. By the choice of $z_k$ for all $k$ we have $\rho(z_1,z_k)\leq s<\lambda$.
On the other hand, by \cite[Chapter 2,Lemma 15]{DS} or \cite[Lemma 3]{Lu4}, there exists a constant $K$ such that
$z_1$ belongs to at most $K$ discs $\triangle(z_k,\lambda)$, a contradiction. Therefore, $\D=\bigcup_{k=0}^{\infty}\Delta(z_k,\delta)$. The proof is complete.
\end{proof}

\noindent{\bf Remark.} { By the proof of the above lemma, we known that for any $ \delta \in (0,1)$, there exists a  $\delta$-lattice $\{z_k\}$ such that $\inf_k|z_k|\geq r$, where $r=\frac{1-\sqrt{1-\delta^2}}{\delta}$.}\msk

The following lemma formulates a stability property of a separated sequence under a small  perturbation.\msk

\begin{lemma}[{\cite[Lemma 2.3]{LSS}}] \label{S}   Let $N>1$. Suppose that $\{a_k\}$ is a $r_0$-separated sequence of $\mathbb{D}$ with $|a_k|\geq 1-\frac{1}{2N}$ and $b_k=(1-N(1-|a_k|))a_k$. Then $\{b_k\}$ is a $\frac{\delta r_0}{N}$-separated sequence of $\mathbb{D}$, where $\delta$ is  a constant independent of $r_0$ and $N$.
\end{lemma}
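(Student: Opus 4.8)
The plan is to bound the pseudo-hyperbolic distance $\rho(b_i,b_j)$ from below by a fixed multiple of $r_0/N$: concretely, I will show that $\rho(b_i,b_j)\gtrsim r_0/N$ whenever $\rho(a_i,a_j)\ge r_0$, with an absolute implied constant, which is precisely the asserted $\frac{\delta r_0}{N}$-separation of $\{b_k\}$. Write $a_k=r_k\zeta_k$ with $r_k=|a_k|$ and $\zeta_k\in\partial\D$. Since $|a_k|\ge 1-\tfrac1{2N}$ we have $N(1-r_k)\le\tfrac12$, so $1-N(1-r_k)\in[\tfrac12,1)$; hence $b_k=(1-N(1-r_k))a_k$ lies in $\D$, carries the same argument $\zeta_k$ as $a_k$, and satisfies
\[
1-|b_k|=(1-r_k)\bigl(1+Nr_k\bigr)\asymp N(1-r_k),
\]
the comparison constants being absolute because $r_k\ge\tfrac12$.

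The heart of the argument is the elementary identity, valid for all $z=|z|\zeta$ and $w=|w|\eta$ in $\D$,
\[
\rho(z,w)^2=\frac{(|z|-|w|)^2+2|z||w|(1-\cos\theta)}{(1-|z||w|)^2+2|z||w|(1-\cos\theta)},\qquad e^{i\theta}=\zeta\bar\eta,
\]
which follows from $1-\rho(z,w)^2=\dfrac{(1-|z|^2)(1-|w|^2)}{|1-\bar wz|^2}$ and $|1-\bar wz|^2=(1-|z||w|)^2+2|z||w|(1-\cos\theta)$. Because $b_i$ and $b_j$ carry the same arguments as $a_i$ and $a_j$, the angular quantity $u:=1-\cos\theta$ is the \emph{same} for the pair $(a_i,a_j)$ and for the pair $(b_i,b_j)$. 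Put $s_k=1-r_k\in(0,\tfrac1{2N}]$, so $s_i+s_j\le\tfrac1N$. A direct computation then yields
\[
|b_i|-|b_j|=-(s_i-s_j)\bigl(1+N-N(s_i+s_j)\bigr),\qquad 1+N-N(s_i+s_j)\asymp N,
\]
together with $1-|b_i||b_j|\asymp N(s_i+s_j)$ and $|b_i||b_j|\asymp1$, while trivially $|a_i|-|a_j|=-(s_i-s_j)$, $1-|a_i||a_j|\asymp s_i+s_j$ and $|a_i||a_j|\asymp1$. Substituting into the identity and writing $X=(s_i-s_j)^2$, $Y=(s_i+s_j)^2$ (so $0\le X\le Y\le N^{-2}$), one obtains, with absolute constants,
\[
\rho(a_i,a_j)^2\asymp\frac{X+u}{Y+u},\qquad \rho(b_i,b_j)^2\asymp\frac{N^2X+u}{N^2Y+u}=\frac{X+u/N^2}{Y+u/N^2}.
\]

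Finally I would invoke the one-variable inequality
\[
\frac{X+t/N^2}{Y+t/N^2}\ \ge\ \frac1{N^2}\cdot\frac{X+t}{Y+t}\qquad(0\le X\le Y,\ t\ge0,\ N\ge1),
\]
which is checked by clearing denominators: the difference equals $(N^2-1)XY+(N^2-N^{-2})Xt+(1-N^{-2})t^2\ge0$. Taking $t=u$ and using $\rho(a_i,a_j)\ge r_0$ gives $\rho(b_i,b_j)^2\gtrsim N^{-2}\rho(a_i,a_j)^2\ge r_0^2/N^2$, hence $\rho(b_i,b_j)\ge\delta r_0/N$ for an absolute constant $\delta$. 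I expect the only real labour to be the polar-coordinate bookkeeping in the middle step — in particular verifying $1+N-N(s_i+s_j)\asymp N$ and $1-|b_i||b_j|\asymp N(s_i+s_j)$ with constants that do not depend on $r_0$ or $N$; once those are in place, the reduction to the displayed one-variable estimate makes the conclusion immediate.
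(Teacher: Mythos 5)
Your proposal is correct and complete. Note that the paper itself offers no proof of this lemma at all: it is imported verbatim as \cite[Lemma 2.3]{LSS}, so there is no internal argument to compare against, and your write-up is a genuine self-contained substitute. The argument is also sound in every step I can check: the identity $\rho(z,w)^2=\bigl((|z|-|w|)^2+2|z||w|(1-\cos\theta)\bigr)/\bigl((1-|z||w|)^2+2|z||w|(1-\cos\theta)\bigr)$ is verified by the subtraction $(1-|z||w|)^2-(1-|z|^2)(1-|w|^2)=(|z|-|w|)^2$; the hypotheses $N>1$ and $|a_k|\ge 1-\tfrac1{2N}$ give $r_k\ge\tfrac12$, $s_i+s_j\le N^{-1}$, and hence $1+Nr_k\asymp N$, $1+N-N(s_i+s_j)\in[N,2N]$, $|a_i||a_j|,|b_i||b_j|\asymp1$, and $1-|b_i||b_j|\asymp N(s_i+s_j)$, all with absolute constants; the exploitation of the fact that $b_k$ and $a_k$ share the same argument (so the angular term $u$ is common to both pairs) is the key structural observation; and the closing inequality $\frac{X+t/N^2}{Y+t/N^2}\ge N^{-2}\frac{X+t}{Y+t}$ for $0\le X\le Y$, $t\ge0$, $N\ge1$ is correctly verified by clearing denominators, since $(N^2-1)XY+(N^2-N^{-2})Xt+(1-N^{-2})t^2\ge0$. (The degenerate case $Y=t=0$ cannot occur because $Y=0$ would force $|a_i|=|a_j|=1$.) The standard route to such perturbation lemmas estimates $|b_i-b_j|$ and $|1-\overline{b_j}b_i|$ separately against their $a$-counterparts; your reduction to a single monotone one-variable inequality in the common angular parameter is cleaner and makes the uniformity of $\delta$ in $r_0$ and $N$ transparent.
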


\subsection{Area methods and equivalent norms}

For $\gamma>1$ and $\zeta\in\partial\D$, define the Koranyi(admissible, non-tangential) approach region $\Gamma_\gamma(\zeta)$ by
$$\Gamma_\gamma(\zeta) =\{z\in\D:|1-\overline{\zeta}z|<\frac{\gamma}{2}(1-|z|^2)\}.$$
In this paper we agree that $\Gamma(\zeta) =\Gamma_2(\zeta)$. It is known that for every $0<r<1$ and $\gamma>1$, there exists $\gamma^\prime>1$ so that
\begr\label{R}
\bigcup\limits_{z\in\Gamma_\gamma(\zeta)}\Delta(z,r)\subseteq\Gamma_{\gamma^\prime}(\zeta).
\endr

Given $z\in\D$,   define $$I(z)=\{\zeta\in\partial\D:z\in\Gamma(\zeta)\}\subset\partial\D.$$ Since $|I(z)|\asymp 1-|z|$, an application of Fubini's theorem yields the following important formula:
$$\int_{\D}\varphi(z)d\nu(z)\asymp\int_{\partial\D}\left(\int_{\Gamma(\zeta)}\varphi(z)\frac{d\nu(z)}{1-|z|}\right)dm(\zeta),$$
where $\varphi$ is any non-negative measurable function and $\nu$ is a finite positive measure.

The following result is the famous Calder\'{o}n theorem and can be found in \cite[Theorem 1.3]{Pa} or \cite[Theorem 5.3]{Pau}.

\begin{lemma} \label{eq} Suppose $0<p,q<\infty$, $f\in H^p$, $f(0)=0$. Then
$$\left(\int_{\partial\D}\left(\int_{\Gamma(\zeta)}\Delta |f|^q(z)dA(z)\right)^{\frac{p}{q}}dm(\zeta)\right)^{\frac{1}{p}}
\asymp\|f\|_{H^p}.$$
\end{lemma}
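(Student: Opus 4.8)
The plan is to deduce the stated area--integral norm equivalence from two more classical ingredients: the non-tangential maximal characterization of $H^p$ and a good-$\lambda$ comparison between the Lusin area integral and the non-tangential maximal function of the subharmonic majorant $|f|^q$. First I would record the pointwise identity
$$\Delta|f|^q=q^2|f|^{q-2}|f'|^2\ge0,$$
valid for holomorphic $f$ by a direct computation with $\Delta=4\partial\bar\partial$. Thus $u:=|f|^q$ is subharmonic and the inner integral in the lemma is exactly $A(u)(\zeta)^2$, where $A(u)(\zeta)=\big(\int_{\Gamma(\zeta)}\Delta u\,dA\big)^{1/2}$ is the Lusin area integral. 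On the boundary side, by Fatou's theorem and the Hardy--Littlewood non-tangential maximal theorem, $\|f\|_{H^p}\asymp\|f^*\|_{L^p(\partial\D)}$, where $f^*(\zeta)=\sup_{z\in\Gamma(\zeta)}|f(z)|$; since $u^*(\zeta):=\sup_{z\in\Gamma(\zeta)}u(z)=(f^*(\zeta))^q$, the lemma reduces to proving
$$\int_{\partial\D}A(u)^{2s}\,dm\asymp\int_{\partial\D}(u^*)^{s}\,dm,\qquad s=\frac{p}{q}.$$

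Before the general case I would isolate the diagonal $s=1$ (that is, $p=q$), which already exhibits the role of the hypothesis $f(0)=0$. Applying the averaging identity of Subsection 2.3, namely $\int_{\D}\varphi\,d\nu\asymp\int_{\partial\D}\int_{\Gamma(\zeta)}\varphi\,\frac{d\nu}{1-|z|}\,dm(\zeta)$, to the measure $d\nu=(1-|z|)\Delta u\,dA$ with $\varphi\equiv1$ gives $\int_{\partial\D}A(u)^2\,dm\asymp\int_{\D}(1-|z|)\Delta|f|^q\,dA$. The standard Littlewood--Paley (Green) identity then yields $\int_{\D}(1-|z|)\Delta|f|^q\,dA\asymp\|f\|_{H^q}^q-|f(0)|^q$, so that, using $f(0)=0$, the diagonal case $\int_{\partial\D}A(u)^2\,dm\asymp\|f\|_{H^q}^q$ follows. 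This both anchors the result and pinpoints where the normalization $f(0)=0$ is consumed.

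For arbitrary $s$ I would run the good-$\lambda$ method comparing $A(u)$ with $(u^*)^{1/2}$ (both of size $|f|^{q/2}$). The two inequalities to establish, for a small parameter $\gamma$ and all $\lambda>0$, are
$$\big|\{\,A(u)>2\lambda,\ (u^*)^{1/2}\le\gamma\lambda\,\}\big|\le C\gamma^2\,\big|\{\,A(u)>\lambda\,\}\big|$$
together with the reverse inequality controlling the level sets of $(u^*)^{1/2}$ by those of $A(u)$ (up to a term that vanishes because $u(0)=|f(0)|^q=0$). Multiplying each by $2s\lambda^{2s-1}$, integrating in $\lambda$, and absorbing the small factor $\gamma^2$ converts them into $\int A(u)^{2s}\asymp\int(u^*)^s$, which is precisely the reduced statement. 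Each good-$\lambda$ inequality is proved by a Calder\'on--Zygmund stopping-time decomposition of the relevant boundary level set into arcs, followed by a localized Green identity on the sawtooth (tent) region over each arc; subharmonicity of $u$ and the aperture containment $(\ref{R})$ are used to pass between the apertures implicit in $A(u)$ and in $u^*$.

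The main obstacle is this pair of good-$\lambda$ inequalities, and within them the localized Green identity on the sawtooth domains: one must bound $\int_{\mathrm{tent}}\Delta u\,dA$ by the boundary oscillation of $u$ and control the error terms generated along the tent boundaries. A secondary difficulty, which explains why this robust route is preferable to a tempting shortcut, is the presence of zeros of $f$: setting $g=f^{q/2}$ would formally reduce everything to the classical Calder\'on theorem for $g$ (the case $q=2$), but $g$ is holomorphic only when $f$ is zero-free, and $\Delta|f|^q=q^2|f|^{q-2}|f'|^2$ is genuinely singular at the zeros of $f$ when $q<2$ (though still locally integrable since $q>0$). The good-$\lambda$ argument sidesteps this entirely, as it invokes only the subharmonicity of $u$ and the maximal function $u^*$, both insensitive to the zeros of $f$.
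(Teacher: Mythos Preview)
The paper does not supply its own proof of this lemma: it is quoted as the ``famous Calder\'on theorem'' and attributed to \cite[Theorem~1.3]{Pa} and \cite[Theorem~5.3]{Pau}. Your outline is a faithful sketch of the standard proof found in those sources: reduce via $u=|f|^q$ and the non-tangential maximal characterization of $H^p$ to the comparison $\|A(u)\|_{L^{2s}}\asymp\|(u^*)^{1/2}\|_{L^{2s}}$, then run the Calder\'on--Zygmund good-$\lambda$ machinery with a localized Green identity over tent regions. Your identification of the role of $f(0)=0$ (through the Hardy--Stein--Spencer/Littlewood--Paley formula in the diagonal case and through $u(0)=0$ in the reverse good-$\lambda$ inequality) and of the subtlety at the zeros of $f$ for $q<2$ is accurate, and your remark that subharmonicity of $|f|^q$ makes the good-$\lambda$ argument robust against that difficulty is to the point. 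In short, there is nothing to compare against in the paper itself, and your proposed route is the correct, standard one.
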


\subsection{Tent spaces}

Tent spaces were introduced by Coifman, Meyer and Stein \cite{CMS} in order to
study several problems in harmonic analysis. Luecking \cite{L} used these tent spaces to study embedding theorems for Hardy spaces on
$\mathbb{R}^n$, results that have been obtained in the unit ball $\mathbb{B}_n$ by Arsenovic and Jevtic \cite{A,J}.
Recently, Pel\'aez and R\"atty\"a \cite{PR} showed that tent spaces have natural analogues
for Bergman spaces, and they may play a role in the theory of small weighted Bergman
spaces similar to that of the original tent spaces in the Hardy space case.

For $0<p,q<\infty$ and a positive Borel measure $\nu$ on $\D$, finite on compact sets, the tent spaces $T^p_q(\nu)$ consists of the $\nu$-equivalence classes of $\nu$-measurable functions $f$ such that
$$\|f\|_{T^p_q(\nu)}^p=\int_{\T}\left(\int_{\Gamma(\zeta)}|f(z)|^qd\nu(z)\right)^{\frac{p}{q}}dm(\zeta)<\infty.$$
 For $0<p<\infty$, define
 $$\|f\|_{T^p_\infty(\nu)}^p=\int_{\T}\left(\nu-ess\sup_{z\in\Gamma(\zeta)}|f(z)|\right)^pdm(\zeta)<\infty.$$

For $0\neq a\in\D$, we define $\zeta_a=\frac{a}{|a|}$ and set
$$Q(a)=\{z\in\D: |1-\overline{\zeta_a}z|<1-|a|^2\}.$$
We also set $Q(0)=\D$.
The space $T^\infty_{q}(\nu)$ consists of $\nu$-measureable functions $f$ on $\D$ with
$$\|f\|_{T^\infty_{q}(\nu)}=\sup_{\zeta\in\partial\D}\left(\sup_{a\in\Gamma(\zeta)}\frac{1}{|I_a|}\int_{Q(a)}|f(z)|^q(1-|z|)d\nu(z)\right)^{\frac{1}{q}}<\infty,$$
where $I_a=\{\zeta\in\partial\D:|\arg\zeta-\arg a|\leq\frac{1-|a|}{2}\}.$
By Section 5.2 of \cite{Zhu}, we notice that $f\in T^\infty_{q}(\nu)$ if and only if
$|f(z)|^q(1-|z|)d\nu(z)$ is a Carleson measure on $\D$. The aperture $\gamma>0$ of Koranyi region is suppressed from the above notation. Moreover, it is well known that any two apertures generate the same function space with equivalent quasinorms.

   The following result gives a description of the dual of $T^{p}_{q}(\nu)$. See \cite{CMS, L, P}.

\begin{lemma}\label{dual}  Let $1\leq p,q<\infty$ with $p+q\neq2$, and let $\nu$ be a positive  Borel measure on $\D$, finite on compact sets of $\D$. Then
the dual of $T^p_q(\nu)$ is isomorphic to $T^{p^\prime}_{q^\prime}(\nu)$ under the pairing:
$$\langle f,g\rangle_{T^2_2(\nu)}=\int_{\D}f(z)\overline{g(z)}(1-|z|)d\nu(z),$$
and there exists a positive constant $C$ such that
\begr
\left|\langle f,g\rangle_{T^2_2(\nu)}\right|\leq C\|f\|_{T^p_q(\nu)}\|g\|_{T^{p^\prime}_{q^\prime}(\nu)}\nonumber
\endr
for any $f\in T^p_q(\nu)$ and $g\in T^{p^\prime}_{q^\prime}(\nu)$.
\end{lemma}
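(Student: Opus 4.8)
The plan is to prove the inclusion $T^{p'}_{q'}(\nu)\hookrightarrow(T^p_q(\nu))^*$ together with the displayed inequality, and then the reverse inclusion. Throughout write $A_q(f)(\zeta)=\big(\int_{\Gamma(\zeta)}|f(z)|^q\,d\nu(z)\big)^{1/q}$, so that $\|f\|_{T^p_q(\nu)}=\|A_q(f)\|_{L^p(\T)}$. Applying the Fubini identity of Section 2.3 to $\phi=|fg|(1-|z|)$ cancels the weight and gives $\int_\D|fg|(1-|z|)\,d\nu\asymp\int_\T\big(\int_{\Gamma(\zeta)}|fg|\,d\nu\big)\,dm$. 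For $1<p<\infty$ (any $q$) and for $q=1<p$ the inequality then follows from two applications of H\"older: first on each cone with the conjugate exponents $q,q'$, giving $\int_{\Gamma(\zeta)}|fg|\,d\nu\le A_q(f)(\zeta)A_{q'}(g)(\zeta)$ (with $A_\infty(g)(\zeta)$ the $\nu$-essential supremum of $|g|$ over $\Gamma(\zeta)$ when $q=1$), and then on $\T$ with $p,p'$, yielding $\|f\|_{T^p_q(\nu)}\|g\|_{T^{p'}_{q'}(\nu)}$. The remaining endpoint is $p=1$ (so $q>1$, $p'=\infty$, and the dual is the Carleson-type $T^\infty_{q'}(\nu)$): there the outer step is no longer a pointwise bound, and one invokes the Coifman--Meyer--Stein estimate $\int_\D|fg|(1-|z|)\,d\nu\lesssim\|g\|_{T^\infty_{q'}(\nu)}\|f\|_{T^1_q(\nu)}$, proved by a stopping-time decomposition of the cones adapted to the Carleson measure $|g|^{q'}(1-|z|)\,d\nu$.

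For the reverse inclusion, let $\Lambda\in(T^p_q(\nu))^*$. First I would represent $\Lambda$ locally. Fix a compact $K\subset\D$; there $\nu$ is finite and $1-|z|$ is bounded above and below, and one checks that $\|\chi_E\|_{T^p_q(\nu)}\to0$ as $\nu(E)\to0$ for Borel $E\subseteq K$ (the area integrals only see the finite-measure set $\{\zeta:\Gamma(\zeta)\cap K\ne\emptyset\}$). Hence $E\mapsto\Lambda(\chi_E)$ is a complex measure on $K$, absolutely continuous with respect to $\nu|_K$; its Radon--Nikodym derivative against $(1-|z|)\,d\nu$ provides a local representative $g_K$ with $\Lambda(f)=\langle f,g_K\rangle_{T^2_2(\nu)}$ for simple, and hence all, $f\in T^p_q(\nu)$ supported in $K$. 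Exhausting $\D$ by an increasing sequence of compacts and using uniqueness on overlaps, the $g_K$ patch to a single measurable $g$ representing $\Lambda$ on every compactly supported element of $T^p_q(\nu)$.

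It remains to show $g\in T^{p'}_{q'}(\nu)$ with norm $\lesssim\|\Lambda\|$, which is \emph{the heart of the matter}. Take $1<p<\infty$. By $L^p$--$L^{p'}$ duality on $\T$, $\|g\|_{T^{p'}_{q'}(\nu)}=\sup\{\int_\T A_{q'}(g)\,h\,dm:\ h\ge0,\ \|h\|_{L^p}\le1\}$. For a fixed such $h$, I would construct a test function $f=g\,|g|^{q'-2}\Phi$, where $\Phi\ge0$ is an averaged, ``tentwise'' version of $h$ chosen so that $f\bar g=|g|^{q'}\Phi$ and, after the Fubini identity, $\langle f,g\rangle\gtrsim\int_\T A_{q'}(g)\,h\,dm$. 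Since $\langle f,g\rangle=\Lambda(f)$, this gives $\int_\T A_{q'}(g)\,h\,dm\lesssim\|\Lambda\|\,\|f\|_{T^p_q(\nu)}$, and taking the supremum over $h$ finishes the proof once the bound $\|f\|_{T^p_q(\nu)}\lesssim1$ is established. (Truncating $g$ to compacts legitimizes the use of the representation from the previous step, and monotone convergence removes the truncation.) The genuine obstacle is exactly this last bound: it reduces to a maximal-/area-function estimate relating $A_q(f)$ to $h$ and to a nontangential maximal function of $A_{q'}(g)$, and it is precisely here that the harmonic-analytic input --- boundedness of the relevant averaging operators, and at the endpoints a stopping-time argument --- is needed.

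The endpoint cases follow the same scheme with the obvious modifications: for $q=1$ the dual is $T^{p'}_\infty(\nu)$ and only the inner exponent degenerates (the outer $L^p$--$L^{p'}$ duality is unchanged), while for $p=1$ the dual is the Carleson space $T^\infty_{q'}(\nu)$, and one instead tests $\Lambda$ against functions supported on Carleson boxes $Q_r(\zeta)$ to recover directly the Carleson bound on $|g|^{q'}(1-|z|)\,d\nu$. Finally, the hypothesis $p+q\ne2$ excludes exactly the pair $p=q=1$, for which both exponents are $1$, the two ``infinity'' endpoints collide, and the present duality scheme degenerates.
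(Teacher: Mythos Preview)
The paper does not prove this lemma; it merely cites it as a known result from \cite{CMS,L,P} (Coifman--Meyer--Stein, Luecking, Pel\'aez). So there is no ``paper's proof'' to compare against---the authors take the tent-space duality as a black box.

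Your sketch is broadly the standard argument from those references. The pairing inequality via two H\"older steps (inner on cones, outer on $\T$) is exactly right for $1<p,q<\infty$, and your identification of the $p=1$ endpoint as the Coifman--Meyer--Stein stopping-time case is correct. The reverse inclusion via local Radon--Nikodym plus a test-function/duality argument is also the right shape.

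That said, your sketch is honest but incomplete at precisely the point you flag: the bound $\|f\|_{T^p_q(\nu)}\lesssim 1$ for $f=g|g|^{q'-2}\Phi$. You say this ``reduces to a maximal-/area-function estimate'' and requires ``harmonic-analytic input,'' which is true, but you have not actually specified $\Phi$ or carried out that estimate. In the cited papers the construction of $\Phi$ and the control of $A_q(f)$ are the substance of the proof (in \cite{CMS} this is done via a good-$\lambda$ inequality relating the area function to the nontangential maximal function; in the measure-theoretic versions one uses the Hardy--Littlewood maximal function on $\T$ applied to $h$). So as a proof this has a genuine gap at the central step; as an outline of the known argument it is accurate, and since the paper itself only quotes the result, nothing more is required here.
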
 \msk

If $\nu=\sum_k \delta_{z_k}(z)$(where $\delta_{z_k}$ denotes a unit mass at $z_k$), where $\{z_k\}$ is a separated sequence,  then we write $T^p_q(\nu)=T^p_q(\{z_k\})$.

   The following result gives a description of the dual of $T^p_q(\{z_k\})$. See \cite{A, J, L, PR}.\msk

\begin{lemma}\label{dual2}  Let $0<q\leq1<p<\infty$ and $\{z_k\}$ be a separated sequence on $\D$. Then
the dual of $T^p_q(\{z_k\})$ is isomorphic to $T^{p^\prime}_{\infty}(\{z_k\})$ under the pairing:
$$\langle f,g\rangle_{T^2_2(\{z_k\})}=\sum_k f(z_k)\overline{g(z_k)}(1-|z_k|),$$
and there exists a positive constant $C$ such that
\begr
\left|\langle f,g\rangle_{T^2_2(\{z_k\})}\right|\leq C\|f\|_{T^p_q(\{z_k\})}\|g\|_{T^{p^\prime}_{\infty}(\{z_k\})}\nonumber
\endr
for any $f\in T^p_q(\{z_k\})$ and $g\in T^{p^\prime}_{q^\prime}(\{z_k\})$.
\end{lemma}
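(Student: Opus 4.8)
The plan is to prove the two inclusions $T^{p'}_\infty(\{z_k\})\hookrightarrow (T^p_q(\{z_k\}))^*$ and $(T^p_q(\{z_k\}))^*\hookrightarrow T^{p'}_\infty(\{z_k\})$ separately, after recasting both quasinorms through the Fubini/area formula $|I(z)|\asymp 1-|z|$ of Subsection 2.3. Writing $e_k$ for the sequence equal to $1$ at $z_k$ and $0$ elsewhere, and recalling $I(z)=\{\zeta\in\partial\D:z\in\Gamma(\zeta)\}$, one has for any sequence $a=\{a_k\}$
\[
\|a\|_{T^p_q(\{z_k\})}^p=\int_{\partial\D}\Big(\sum_{z_k\in\Gamma(\zeta)}|a_k|^q\Big)^{p/q}dm(\zeta)=\big\|(a_k\chi_{I(z_k)})_k\big\|_{L^p(\partial\D;\ell^q)}^p,
\]
while $\sup_{z_k\in\Gamma(\zeta)}|g(z_k)|$ is exactly the $\ell^\infty$-norm of the fibre $(g(z_k)\chi_{I(z_k)}(\zeta))_k$, so that $\|g\|_{T^{p'}_\infty(\{z_k\})}=\|(g(z_k)\chi_{I(z_k)})_k\|_{L^{p'}(\partial\D;\ell^\infty)}$. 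This identifies $T^p_q(\{z_k\})$, up to the fixed constants in $|I(z)|\asymp 1-|z|$, with a subspace of the vector-valued space $L^p(\partial\D;\ell^q)$, and it is the fibrewise duality $(\ell^q)^*=\ell^\infty$ (valid because $0<q\le1$) that explains the appearance of $T^{p'}_\infty$. Since $\ell^q$ is not locally convex for $q<1$, neither is $L^p(\partial\D;\ell^q)$, so Hahn--Banach is unavailable there and the surjectivity of the representation must be produced by hand.

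For the easy inclusion $T^{p'}_\infty(\{z_k\})\hookrightarrow (T^p_q(\{z_k\}))^*$ — which also yields the displayed inequality — I would argue by a three-step estimate. First, since $1-|z_k|\asymp|I(z_k)|=m(\{\zeta:z_k\in\Gamma(\zeta)\})$, Fubini turns the pairing into
\[
\Big|\langle f,g\rangle_{T^2_2(\{z_k\})}\Big|\lesssim\int_{\partial\D}\sum_{z_k\in\Gamma(\zeta)}|f(z_k)|\,|g(z_k)|\,dm(\zeta).
\]
Second, the fibrewise Hölder inequality in its $\ell^q$--$\ell^\infty$ form $\sum_k|x_k||y_k|\le(\sup_k|y_k|)\sum_k|x_k|\le(\sup_k|y_k|)(\sum_k|x_k|^q)^{1/q}$, licit precisely because $q\le1$, bounds the inner sum by $(\sup_{z_k\in\Gamma(\zeta)}|g(z_k)|)\,(\sum_{z_k\in\Gamma(\zeta)}|f(z_k)|^q)^{1/q}$. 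Third, an outer application of Hölder on $\partial\D$ with exponents $p$ and $p'$ produces $\|f\|_{T^p_q(\{z_k\})}\|g\|_{T^{p'}_\infty(\{z_k\})}$, as desired.

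For the hard inclusion — that every $\Lambda\in(T^p_q(\{z_k\}))^*$ is represented by some $g\in T^{p'}_\infty(\{z_k\})$ — I would first note that finitely supported sequences are dense in $T^p_q(\{z_k\})$ (dominated convergence applied to the tails, using $p,q<\infty$), so $\Lambda$ is determined by the scalars $c_k=\Lambda(e_k)$; defining $g$ through $\overline{g(z_k)}\,(1-|z_k|)=c_k$ gives $\Lambda=\langle\,\cdot\,,g\rangle_{T^2_2(\{z_k\})}$ on finite sequences. Everything then reduces to the norm bound $\|g\|_{T^{p'}_\infty(\{z_k\})}\lesssim\|\Lambda\|$. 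To obtain it, set $h(\zeta)=\sup_{z_k\in\Gamma(\zeta)}|g(z_k)|$; by $L^{p'}$-duality (here $p'>1$) choose $v\ge0$ with $\|v\|_{L^p}\le1$ and $\int_{\partial\D}hv\,dm\gtrsim\|h\|_{L^{p'}}$, and let $\{E_k\}$ be the measurable partition of $\{h>0\}$ on which $z_k$ realises the supremum, so that $E_k\subseteq I(z_k)$, the $E_k$ are pairwise disjoint, and $\int hv=\sum_k|g(z_k)|\int_{E_k}v$. Testing $\Lambda$ against truncations of the sequence $a_k=\bigl(\int_{E_k}v\bigr)/(1-|z_k|)$, with phases aligned to $g(z_k)$, gives $\langle a,g\rangle_{T^2_2(\{z_k\})}=\int hv\gtrsim\|h\|_{L^{p'}}$, whence $\|h\|_{L^{p'}}\lesssim\|\Lambda\|\,\|a\|_{T^p_q(\{z_k\})}$.

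The crux — and the step I expect to be the main obstacle — is the uniform control $\|a\|_{T^p_q(\{z_k\})}\lesssim\|v\|_{L^p}$ for the test sequence just built, that is, the $L^p$-boundedness of the sublinear map $v\mapsto\bigl(\sum_{z_k\in\Gamma(\cdot)}a_k^q\bigr)^{1/q}$. The crude pointwise estimate $a_k\lesssim Mv(\zeta)$ for $\zeta\in I(z_k)$ ($M$ the Hardy--Littlewood maximal operator) does not suffice by itself, since summing $a_k^q$ over all scales of $\Gamma(\zeta)$ may diverge; the disjointness of the $\{E_k\}$ must be exploited so that, scale by scale, the contributions telescope. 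Here the separatedness of $\{z_k\}$ enters decisively: it bounds the number of $z_k$ in $\Gamma(\zeta)$ within each dyadic annulus by a constant (cf. Lemma \ref{e} and \cite[Chapter 2, Lemma 15]{DS}), which, together with the disjointness of $\{E_k\}$ and the $L^{p'}$-boundedness of $M$, yields $\|a\|_{T^p_q(\{z_k\})}\lesssim\|v\|_{L^p}$ and closes the argument. An alternative and perhaps cleaner route to the same bound is via an atomic decomposition of $T^p_q(\{z_k\})$ in the spirit of \cite{CMS,L,PR}, reducing the boundedness of $\Lambda$ to its uniform boundedness on tent atoms.
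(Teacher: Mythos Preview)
The paper does not supply a proof of this lemma: it is stated with the attribution ``See \cite{A, J, L, PR}'' and treated as a known result from the tent-space literature, so there is no in-paper argument to compare against.

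On its own terms, your easy inclusion is correct and complete: Fubini via $|I(z_k)|\asymp 1-|z_k|$, then the fibrewise bound $\sum_k|x_ky_k|\le(\sup_k|y_k|)(\sum_k|x_k|^q)^{1/q}$ (this is exactly where $q\le1$ enters), then H\"older on $\partial\D$ with exponents $p,p'$. For the hard inclusion your setup is the natural one, and you correctly isolate the crux as the estimate $\|a\|_{T^p_q(\{z_k\})}\lesssim\|v\|_{L^p}$ for the test sequence $a_k=(1-|z_k|)^{-1}\int_{E_k}v$. Your outline for it (separatedness bounds the number of $z_k$ per dyadic layer of $\Gamma(\zeta)$; disjointness of the $E_k$ prevents overcounting) points in the right direction but is not yet a proof --- the interaction between layers still has to be controlled, and the crude pointwise bound $a_k\lesssim Mv$ that you yourself discard is indeed insufficient. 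The cited references close this gap either by a maximal/stopping-time argument on tents (in the spirit of \cite{CMS,L}) or, as you propose at the end, via an atomic decomposition of $T^p_q(\{z_k\})$ (cf.\ \cite{A,PR}); the atomic route is the cleaner of the two, since boundedness of $\Lambda$ on atoms supported in a single $Q(a)$ yields the $T^{p'}_\infty$ control on $g$ by a direct computation on that tent. If you want a self-contained argument, that is the one to write out.
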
   \msk

We need the following important result, which plays in part a similar role on $H^p$ as the atomic decomposition on standard Bergman spaces $A^p_\alpha$.

\begin{lemma}[{\cite[Lemma D]{P}}] \label{SO}  Let $0<p<\infty$ and   $\{z_k\}$ be a separated sequence. Define
\begr
S_\lambda(f)(z)=\sum_{k}f(z_k)\left(\frac{1-|z_k|}{1-\overline{z_k}z}\right)^\lambda,~~~z\in\D.\nonumber
\endr
Then there exists $\lambda_0=\lambda_0(p)\geq1$ such that $S_\lambda:T^p_2(\{z_k\})\to H^p$ is bounded for all $\lambda>\lambda_0$.
\end{lemma}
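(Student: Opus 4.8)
\medskip
\noindent\emph{Proof strategy.} The statement is quoted from \cite[Lemma D]{P}; the plan I would follow to prove it is to treat separately the cases $1<p<\infty$ and $0<p\le1$. Throughout, write $c_k=f(z_k)$ and $K_k(z)=\big(\frac{1-|z_k|}{1-\ol{z_k}z}\big)^{\lambda}$, and fix a small $\delta\in(0,1)$ for which the pseudohyperbolic disks $\Delta(z_k,\delta)$ are pairwise disjoint (possible since $\{z_k\}$ is separated); recall $A(\Delta(z_k,\delta))\asymp(1-|z_k|^2)^2$. A routine estimate shows that $\sum_kc_kK_k$ converges locally uniformly in $\D$ once $\lambda$ is large, so that $S_\lambda f\in H(\D)$. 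Suppose first $1<p<\infty$. I would use the duality $(H^p)^{*}\cong H^{p'}$ under the Cauchy pairing, so that $\|S_\lambda f\|_{H^p}\asymp\sup\{|\langle S_\lambda f,h\rangle|:\|h\|_{H^{p'}}\le1\}$, and then expand both functions in Taylor series to get
\[
\langle S_\lambda f,h\rangle=\sum_kc_k(1-|z_k|)^{\lambda}\,\ol{\mathcal R^{\lambda-1}h(z_k)},
\]
where $\mathcal R^{\lambda-1}h(z)=\sum_{n\ge0}\binom{\lambda+n-1}{n}\widehat h(n)z^n$ is the fractional derivative of order $\lambda-1$ (for $\lambda=1$ this is simply $h$). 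Setting $g(z_k)=(1-|z_k|)^{\lambda-1}\mathcal R^{\lambda-1}h(z_k)$, the right-hand side equals $\langle f,g\rangle_{T^2_2(\{z_k\})}$, so Lemma \ref{dual} (applied with $q=q'=2$ to the measure $\sum_k\delta_{z_k}$) yields $|\langle S_\lambda f,h\rangle|\lesssim\|f\|_{T^p_2(\{z_k\})}\|g\|_{T^{p'}_2(\{z_k\})}$.

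\medskip
The remaining step in this case is the bound $\|g\|_{T^{p'}_2(\{z_k\})}\lesssim\|h\|_{H^{p'}}$. By subharmonicity of $|\mathcal R^{\lambda-1}h|^2$ on $\Delta(z_k,\delta)$ together with the volume comparison, $(1-|z_k|^2)^{2(\lambda-1)}|\mathcal R^{\lambda-1}h(z_k)|^2\lesssim(1-|z_k|^2)^{-2}\int_{\Delta(z_k,\delta)}|\mathcal R^{\lambda-1}h|^2\,dA$; summing over $z_k\in\Gamma(\zeta)$, using the disjointness and \eqref{R} to remain inside a fixed wider cone $\Gamma_{\gamma'}(\zeta)$, and integrating over $\T$, one recognises the weighted area integral of $\mathcal R^{\lambda-1}h$, which is comparable to $\|h\|_{H^{p'}}^{p'}$ by the fractional-derivative form of the Calder\'on area theorem (cf.\ Lemma \ref{eq}), valid for every $\lambda>1$. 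This proves the lemma for $1<p<\infty$; in particular $S_\lambda\colon T^2_2(\{z_k\})\to H^2$ is bounded.

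\medskip
For $0<p\le1$ the plan is to use instead the atomic decomposition of tent spaces \cite{CMS}: write $f=\sum_j\lambda_ja_j$ with $\sum_j|\lambda_j|^p\lesssim\|f\|_{T^p_2(\{z_k\})}^p$, where each $a_j$ is supported on $\{z_k\}\cap\widehat Q(I_j)$ ($\widehat Q(I)$ denoting the tent over the subarc $I\subset\T$) and $\|a_j\|_{T^2_2(\{z_k\})}\le|I_j|^{1/2-1/p}$. Since $\|\cdot\|_{H^p}^p$ is subadditive, it then suffices to bound $\|S_\lambda a\|_{H^p}$ uniformly over such atoms $a$ (supported on $\widehat Q(I)$, $|I|=t$, $I$ centred at $\zeta_0$), provided $\lambda$ is large. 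I would split $\T=2I\cup(\T\setminus2I)$. On $2I$, H\"older with exponent $2/p>1$ and the $H^2$-bound from the previous step give $\int_{2I}|S_\lambda a|^p\,dm\lesssim t^{1-p/2}\|a\|_{T^2_2(\{z_k\})}^p\le1$. On $\T\setminus2I$, for $\zeta$ with $|\zeta-\zeta_0|\asymp2^mt$ and $z_k\in\widehat Q(I)$ one has $|1-\ol{z_k}\zeta|\gtrsim2^mt$, hence $|K_k(\zeta)|\lesssim(2^mt)^{-\lambda}(1-|z_k|)^{\lambda}$; combining this with Cauchy--Schwarz, the separation estimate $\#\{z_k\in\widehat Q(I):1-|z_k|\asymp2^{-n}t\}\lesssim2^n$, and $\|a\|_{T^2_2(\{z_k\})}\le t^{1/2-1/p}$, one bounds $|S_\lambda a(\zeta)|$ and then $\int_{\T\setminus2I}|S_\lambda a|^p\,dm$ by a double series in $m$ and $n$ which converges as soon as $\lambda>\lambda_0(p)$. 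Adding the two contributions gives $\|S_\lambda a\|_{H^p}\lesssim1$, and the lemma follows.

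\medskip
The step I expect to be the main obstacle is different in the two ranges. For $1<p<\infty$ it is the sampling estimate $\|g\|_{T^{p'}_2(\{z_k\})}\lesssim\|h\|_{H^{p'}}$, i.e.\ passing from the continuous weighted area integral to its discretisation along the separated sequence $\{z_k\}$, which rests on subharmonicity together with the disjointness and size of the disks $\Delta(z_k,\delta)$. For $0<p\le1$ it is the ``far field'' estimate on $\T\setminus2I$: one has to play the decay of $|1-\ol{z_k}\zeta|^{-\lambda}$ off against the distribution of the points $z_k$ inside a Carleson box, and it is precisely here that $\lambda$ has to be taken large in terms of $p$, which is the origin of the threshold $\lambda_0(p)$.
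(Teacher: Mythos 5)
The paper does not prove this lemma at all: it is imported verbatim from \cite[Lemma D]{P} (going back to Luecking \cite{L}), so there is no in-paper argument to compare against. Your outline --- duality $(H^p)^{*}\cong H^{p'}$ combined with the $T^p_2$--$T^{p'}_2$ pairing and the fractional-derivative area theorem for $1<p<\infty$, and the tent-space atomic decomposition with the $2I$ versus $\T\setminus 2I$ splitting for $0<p\le1$ --- is exactly the standard proof given in those sources, and the exponent bookkeeping (in particular the threshold $\lambda>\max\{1,1/p\}$ emerging from the far-field series) checks out.
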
 \msk

\subsection{Local estimates}

We shall also use the following inequality.   Here and henceforth $\Delta$ denotes the Laplacian operator.\msk

\begin{lemma}[{\cite[Lemma 3]{P}\label{L}}]  If $2\leq q<\infty$ and $0<r<1$, there is a constant $C(q,r)$ such that
\begr
|f^\prime(z)|^q(1-|z|^2)^q\leq C(q,r)\int_{\Delta(z,r)}\Delta|f|^q(\zeta)dA(\zeta),~~~z\in\D.\nonumber
\endr
\end{lemma}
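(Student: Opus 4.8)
The plan is to reduce, via a M\"obius change of variable, to a coercivity estimate at the origin, and then to deduce that estimate from the Poincar\'e inequality applied to $|g|^{q/2}$ together with a normal families argument; the coercivity estimate, and in particular one degenerate alternative inside its proof, will be the main obstacle.

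\emph{Step 1: reduction to $z=0$.} Fix $z\in\D$ and let $\sigma=\sigma_z\in\aut$, so that $\sigma(0)=z$, $\sigma(z)=0$, $|\sigma'(0)|=1-|z|^2$, and $\sigma(\Delta(0,r))=\Delta(z,r)$ since $\sigma$ is a pseudo-hyperbolic isometry. Put $g=f\circ\sigma\in H(\D)$. As $\sigma$ is holomorphic, $\Delta(|f|^q\circ\sigma)=\bigl((\Delta|f|^q)\circ\sigma\bigr)|\sigma'|^2$, and hence the change of variables $\zeta=\sigma(w)$ (for which $dA(\zeta)=|\sigma'(w)|^2\,dA(w)$) gives
\[\int_{\Delta(0,r)}\Delta|g|^q\,dA=\int_{\Delta(z,r)}\Delta|f|^q\,dA,\]
while $|g'(0)|=|f'(z)|\,|\sigma'(0)|=|f'(z)|(1-|z|^2)$. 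Thus the lemma is equivalent to the assertion that there is $C=C(q,r)$ with $|g'(0)|^q\le C\int_{\Delta(0,r)}\Delta|g|^q\,dA$ for every $g\in H(\D)$.

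\emph{Step 2: the mechanism.} For $g\in H(\D)$ and $u:=|g|^{q/2}$ one has, a.e. on $\D$, $\Delta|g|^q=q^2|g|^{q-2}|g'|^2=4|\nabla u|^2$; since $q\ge2$, $u$ is continuous and $u\in W^{1,2}(\Delta(0,r))$. When $q=2$ the claim is immediate, because then $|g'(0)|^2\le r^{-2}\int_{\Delta(0,r)}|g'|^2\,dA=(4r^2)^{-1}\int_{\Delta(0,r)}\Delta|g|^2\,dA$ by the sub-mean value inequality for the subharmonic function $|g'|^2$. For $q>2$, the Poincar\'e inequality on the disc $\Delta(0,r)$ gives
\[\int_{\Delta(0,r)}\bigl(u-\overline u\bigr)^2\,dA\lesssim_r\int_{\Delta(0,r)}|\nabla u|^2\,dA=\tfrac14\int_{\Delta(0,r)}\Delta|g|^q\,dA,\]
$\overline u$ denoting the mean of $u$ over $\Delta(0,r)$; so it suffices to prove the coercivity bound
\[\int_{\Delta(0,r)}\bigl(u-\overline u\bigr)^2\,dA\gtrsim_{q,r}|g'(0)|^q,\qquad u=|g|^{q/2}.\tag{$\dagger$}\]

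\emph{Step 3: proof of $(\dagger)$.} Both sides of $(\dagger)$ are homogeneous of degree $q$ in $g$, so we may suppose $|g'(0)|=1$ and argue by contradiction: assume there are $g_n\in H(\D)$ with $|g_n'(0)|=1$ and $\int_{\Delta(0,r)}(u_n-\overline{u_n})^2\,dA\to0$, where $u_n=|g_n|^{q/2}$. If $(\overline{u_n})$ is bounded, then $\int_{\Delta(0,r)}|g_n|^q\,dA$ is bounded, hence $\{g_n\}$ is normal on $\Delta(0,r)$; a locally uniform limit $g$ then satisfies $|g|\equiv\mathrm{const}$ on each disc $\Delta(0,r')$ with $r'<r$, so $g$ is constant and $g_n'(0)\to g'(0)=0$, contradicting $|g_n'(0)|=1$. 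If instead $\overline{u_n}\to\infty$, set $\lambda_n=\overline{u_n}^{\,2/q}\to\infty$ and $\tilde g_n=g_n/\lambda_n$; then the mean of $|\tilde g_n|^{q/2}$ over $\Delta(0,r)$ is $1$, $\int_{\Delta(0,r)}(|\tilde g_n|^{q/2}-1)^2\,dA=\lambda_n^{-q}\int_{\Delta(0,r)}(u_n-\overline{u_n})^2\,dA\to0$, and $|\tilde g_n'(0)|=\lambda_n^{-1}\to0$. As before $\{\tilde g_n\}$ is normal on $\Delta(0,r)$ and any locally uniform limit is a unimodular constant; writing $\tilde g_n=\tilde g_n(0)(1+h_n)$ with $h_n(0)=0$ we get $h_n\to0$ locally uniformly and $|h_n'(0)|\asymp\lambda_n^{-1}$. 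Fix $r'<r$. Expanding $|1+h_n|^{q/2}=1+\tfrac q2\,{\rm Re}\,h_n+Q_n$ with $|Q_n|\lesssim_q|h_n|^2$, using the orthogonality identity $\|{\rm Re}\,h_n\|_{L^2(\Delta(0,r'))}^2=\tfrac12\|h_n\|_{L^2(\Delta(0,r'))}^2\gtrsim_{r'}|h_n'(0)|^2$ for analytic $h_n$ with $h_n(0)=0$, and using $\|Q_n\|_{L^2(\Delta(0,r'))}\lesssim_q\|h_n\|_{L^4(\Delta(0,r'))}^2\le\|h_n\|_{\infty,\Delta(0,r')}\|h_n\|_{L^2(\Delta(0,r'))}=o\bigl(\|h_n\|_{L^2(\Delta(0,r'))}\bigr)$, one finds $\inf_c\int_{\Delta(0,r')}(|\tilde g_n|^{q/2}-c)^2\,dA\gtrsim_{q,r'}\lambda_n^{-2}$, whence
\[\int_{\Delta(0,r)}(u_n-\overline{u_n})^2\,dA=\lambda_n^{\,q}\int_{\Delta(0,r)}\bigl(|\tilde g_n|^{q/2}-\overline{|\tilde g_n|^{q/2}}\bigr)^2\,dA\ \ge\ \lambda_n^{\,q}\inf_c\int_{\Delta(0,r')}(|\tilde g_n|^{q/2}-c)^2\,dA\ \gtrsim\ \lambda_n^{\,q-2}\longrightarrow\infty,\]
contradicting the hypothesis. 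This proves $(\dagger)$, and with it the lemma. The heart of the matter is exactly this second alternative: the amplification factor $\lambda_n^{\,q-2}$, genuine only because $q>2$, is what rules out the degeneration $\overline{u_n}\to\infty$ (for $q=2$ no such factor exists, which is why that case is dispatched separately in Step~2); the orthogonality identity $\|{\rm Re}\,h\|_{L^2}^2=\tfrac12\|h\|_{L^2}^2$ for analytic $h$ keeps the linear part of the expansion under control, and passing to a slightly smaller concentric disc handles integrability up to the bounding circle in the normal families argument.
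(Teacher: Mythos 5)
Your argument is correct, but it is worth noting that the paper does not actually prove this lemma: it is imported verbatim from Pel\'aez \cite{P} (Lemma~3 there, going back to Luecking \cite{L}), so there is no in-paper proof to match. Your route is genuinely different from the one in that literature. The classical proof exploits the subharmonicity of $|f|^{q-2}|f'|^2=q^{-2}\Delta|f|^q$ (it is $\exp\bigl((q-2)\log|f|+2\log|f'|\bigr)$ with $q\ge 2$) together with direct sub-mean-value and Cauchy-type estimates, and produces an explicit constant $C(q,r)$. You instead reduce by M\"obius invariance to the origin (this step is exact and standard), convert $\int\Delta|g|^q$ into the Dirichlet integral of $u=|g|^{q/2}$, and then prove the coercivity bound $(\dagger)$ by Poincar\'e plus a normal-families compactness argument. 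I checked the delicate points: the splitting into the two degenerations of $\overline{u_n}$ is exhaustive after passing to subsequences; the identity $\|{\rm Re}\,h\|_{L^2(\Delta(0,r'))}^2=\tfrac12\|h\|_{L^2(\Delta(0,r'))}^2$ for analytic $h$ with $h(0)=0$ is correct (it uses $\int_{\Delta(0,r')}h^2\,dA=0$ by the mean value property on a disc centred at the origin); the lower bound $\|h_n\|_{L^2(\Delta(0,r'))}\gtrsim |h_n'(0)|$ follows from orthogonality of the Taylor monomials; and the amplification $\lambda_n^{q-2}\to\infty$ indeed requires $q>2$, which is why your separate treatment of $q=2$ via subharmonicity of $|g'|^2$ is necessary. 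Two points you gloss over but which are fine: $u=|g|^{q/2}$ is locally Lipschitz for $q\ge2$ (so the Poincar\'e inequality applies and $\int|\nabla u|^2=\tfrac14\int\Delta|g|^q$ holds despite the zeros of $g$), and $\tilde g_n(0)\neq0$ for large $n$ so the factorization $\tilde g_n=\tilde g_n(0)(1+h_n)$ is legitimate. The trade-off is that your compactness argument yields a non-effective constant $C(q,r)$, whereas the subharmonicity proof is constructive; on the other hand, your variational mechanism isolates cleanly why $q\ge2$ is the natural threshold.
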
 \msk

The following lemma plays an essential role in the proof of Theorem 1.

\begin{lemma}\label{LE} Let $0<p<\infty$,  and $0<s<r<1$. Then there exists a constant $C=C(p,s,r)$ such that
\begr
|f^\prime(z)-f^\prime(w)|^p\leq C\rho(z,w)^p\frac{\int_{\Delta(z,r)}|f(\zeta)|^pdA(\zeta)}{(1-|z|^2)^{p+2}}\nonumber
\endr
for all $z\in\D$, $w\in \Delta(z,s)$ and $f\in H(\D)$.
\end{lemma}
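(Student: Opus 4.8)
The plan is to reduce the estimate to a pointwise bound on the second derivative. Writing $f'(z)-f'(w)=\int_{[w,z]}f''(\zeta)\,d\zeta$ along the Euclidean segment from $w$ to $z$, we get $|f'(z)-f'(w)|\le |z-w|\,\sup_{\zeta\in[w,z]}|f''(\zeta)|$, so it suffices to control two quantities: the length $|z-w|$, and $\sup_{\zeta\in[w,z]}|f''(\zeta)|^p$ in terms of $(1-|z|^2)^{-2p-2}\int_{\Delta(z,r)}|f|^p\,dA$. Since $\Delta(z,s)$ is a Euclidean disc, hence convex, and contains $z$, the whole segment $[w,z]$ lies in $\Delta(z,s)\subseteq\Delta(z,r)$, which is where we will run the estimate on $f''$.

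For the length, I would use the identity $1-\rho(z,w)^2=\frac{(1-|z|^2)(1-|w|^2)}{|1-\bar z w|^2}$ together with the standard comparison $1-|w|^2\asymp 1-|z|^2$ valid on $\Delta(z,s)$ (with constants depending only on $s$, since $\rho(z,w)<s<1$); this gives $|1-\bar z w|\asymp 1-|z|^2$ and hence $|z-w|=\rho(z,w)|1-\bar z w|\lesssim \rho(z,w)(1-|z|^2)$, the implied constant depending only on $s$.

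For the derivative bound, set $t_0=\frac{r-s}{1-sr}\in(0,1)$. The sharp triangle inequality for the pseudo-hyperbolic metric, $\rho(z,\eta)\le\frac{\rho(z,\zeta)+\rho(\zeta,\eta)}{1+\rho(z,\zeta)\rho(\zeta,\eta)}$, gives $\Delta(\zeta,t_0)\subseteq\Delta(z,r)$ for every $\zeta\in\Delta(z,s)$. Since $\Delta(\zeta,t_0)$ is a Euclidean disc of radius $\gtrsim (1-|\zeta|^2)\asymp 1-|z|^2$ whose Euclidean centre lies within $t_0$ times that radius of $\zeta$, there is a constant $\varepsilon_0=\varepsilon_0(s,r)>0$ with $D\big(\zeta,\varepsilon_0(1-|z|^2)\big)\subseteq\Delta(z,r)$ for all $\zeta\in\Delta(z,s)$. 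Applying Cauchy's formula for $f''(\zeta)$ over the circle of radius $\tfrac{\varepsilon_0}{2}(1-|z|^2)$, followed by the sub-mean-value property of $|f|^p$ on discs contained in $\Delta(z,r)$ — which holds for \emph{all} $p>0$, and is precisely what removes any restriction on $p$ — yields $|f''(\zeta)|^p\lesssim (1-|z|^2)^{-2p-2}\int_{\Delta(z,r)}|f|^p\,dA$ with constant depending only on $p,s,r$.

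Combining the three bounds gives $|f'(z)-f'(w)|^p\le |z-w|^p\sup_{\zeta\in[w,z]}|f''(\zeta)|^p\lesssim \rho(z,w)^p(1-|z|^2)^p\cdot(1-|z|^2)^{-2p-2}\int_{\Delta(z,r)}|f|^p\,dA$, which is the assertion. The only delicate point is the geometric bookkeeping in the third paragraph: verifying that a Euclidean disc of radius comparable to $1-|z|^2$ around an arbitrary point of $\Delta(z,s)$ sits inside $\Delta(z,r)$, uniformly in $z$, and that every constant depends only on $p,s,r$ and not on $z$, $w$, or $f$. An equivalent route, which makes this automatic, is to pull everything back by the automorphism $\sigma_z$ so that the inequality becomes a fixed local estimate on discs of radius bounded away from $1$.
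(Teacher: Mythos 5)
Your proof is correct, and it takes a genuinely different (and more self-contained) route than the paper. The paper splits the estimate in two: it first quotes a known lemma of Saukko and of Koo--Wang, which already delivers $|f'(z)-f'(w)|^p\lesssim \rho(z,w)^p(1-|z|^2)^{-2}\int_{\Delta(z,r_1)}|f'|^p\,dA$ for $w\in\Delta(z,s)$, and then uses one application of the sub-mean-value property (plus $1-|\eta|^2\asymp 1-|z|^2$ and $A(\Delta(z,r_1))\asymp(1-|z|^2)^2$) to replace the average of $|f'|^p$ by the average of $|f|^p$ at the cost of an extra $(1-|z|^2)^{-p}$. You instead extract the factor $\rho(z,w)^p$ directly, by integrating $f''$ along the Euclidean segment $[w,z]$ (which does lie in $\Delta(z,s)$, since pseudo-hyperbolic discs are Euclidean discs, hence convex) and using $|z-w|=\rho(z,w)|1-\bar w z|\lesssim\rho(z,w)(1-|z|^2)$; the exponent bookkeeping $(1-|z|^2)^{p}\cdot(1-|z|^2)^{-2p-2}=(1-|z|^2)^{-p-2}$ checks out. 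Your geometric verification that $\Delta(\zeta,t_0)\subseteq\Delta(z,r)$ with $t_0=\frac{r-s}{1-sr}$ (indeed $\frac{s+t_0}{1+st_0}=r$) and hence that a Euclidean disc of radius $\varepsilon_0(1-|z|^2)$ about any $\zeta\in\Delta(z,s)$ sits inside $\Delta(z,r)$ is sound, as is the appeal to subharmonicity of $|f|^p$ for all $p>0$. What your argument buys is independence from the external references and a transparent source for each factor; what the paper's argument buys is brevity, since the cited lemma already packages the $\rho(z,w)^p$ gain. The two are equally valid.
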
\msk

\begin{proof} Let $0<s<r_1<r$, $r_2=r-r_1$. By \cite[Lemma 3.1]{S1} or \cite[Lemma 2.2]{KW}, we have
\begr
|f^\prime(z)-f^\prime(w)|^p\leq C\rho(z,w)^p\frac{\int_{\Delta(z,r_1)}|f^\prime(\eta)|^pdA(\eta)}{(1-|z|^2)^{2}} \label{DL}
\endr
for any $f\in H(\D)$.

On the other hand, by the mean value property and the fact that $\Delta(\eta,r_2)\subset\Delta(z,r)$ and $1-|z|^2\asymp1-|\eta|^2$ for any $\eta\in\Delta(z,r_1)$, we get
  \begr
|f^\prime(\eta)|^p&\lesssim&\frac{1}{(1-|\eta|^2)^{p+2}}\int_{\Delta(\eta,r_2)}|f(\zeta)|^pdA(\zeta)\nonumber\\
&\lesssim&\frac{1}{(1-|z|^2)^{p+2}}\int_{\Delta(z,r)}|f(\zeta)|^pdA(\zeta).\nonumber
\endr
Then the above estimate together with $A(\Delta(z,r_1))\asymp(1-|z|^2)^2$ yield the desired result.
\end{proof}

\begin{lemma}\label{Cs1} Let $0<p<s\leq q<\infty$  and $0<r<1$. Then
\begr
\int_{\D}|f^\prime(z)|^qd\mu(z)\lesssim\left(\int_{\D}|f(\zeta)|^s\frac{(\mu(\Delta(\zeta,r)))^{\frac{s}{q}}}{(1-|\zeta|^2)^{s+2}}dA(\zeta)\right)^{\frac{q}{s}}\nonumber
\endr
for all $f\in H(\D)$.
\end{lemma}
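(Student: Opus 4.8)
The plan is to reduce the pull-back integral over $\D$ to a sum over a $\delta$-lattice, apply the submean-value / local estimate Lemma~\ref{LE} pointwise, and then recognize the resulting sum as (a discretization of) the right-hand side. First I would fix a $\delta$-lattice $\{z_k\}$ with $\inf_k|z_k|$ as large as needed (Lemma~\ref{e}), so that the pseudohyperbolic disks $\Delta(z_k,\delta)$ cover $\D$ and the dilated disks $\Delta(z_k,r)$ have bounded overlap. On $\Delta(z_k,\delta)$ we have $1-|z|^2\asymp 1-|z_k|^2$ and $\mu(\Delta(z_k,\delta))\lesssim\mu(\Delta(z_k,r))$ (since $s<r$ gives room to enlarge the radius), hence
\begin{equation*}
\int_{\D}|f'(z)|^q\,d\mu(z)\;\lesssim\;\sum_k\Big(\sup_{z\in\Delta(z_k,\delta)}|f'(z)|^q\Big)\mu(\Delta(z_k,r)).
\end{equation*}

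Next I would bound the supremum of $|f'|$ over $\Delta(z_k,\delta)$. By Lemma~\ref{LE} (with some intermediate radius, comparing $z\in\Delta(z_k,\delta)$ to the center $z_k$) together with the submean-value property for $|f'|^p$ used in the proof of that lemma, one gets
\begin{equation*}
\sup_{z\in\Delta(z_k,\delta)}|f'(z)|^q\;\lesssim\;\frac{1}{(1-|z_k|^2)^{q+2q/s}}\Big(\int_{\Delta(z_k,r)}|f(\zeta)|^s\,dA(\zeta)\Big)^{q/s},
\end{equation*}
where the exponent $q+2q/s$ comes from first passing from $|f'|^q$ to an $L^q$ average of $|f'|$, then to an $L^s$ average of $f$ via the mean value property, and finally normalizing by $A(\Delta(z_k,r))^{q/s}\asymp(1-|z_k|^2)^{2q/s}$. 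Substituting, the sum becomes
\begin{equation*}
\sum_k\frac{(\mu(\Delta(z_k,r)))}{(1-|z_k|^2)^{q+2q/s}}\Big(\int_{\Delta(z_k,r)}|f(\zeta)|^s\,dA(\zeta)\Big)^{q/s}.
\end{equation*}

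Now I would invoke the hypothesis $s\le q$, so that $q/s\ge 1$ and $\sum a_k\le(\sum a_k^{1/t})^t$ fails in the wrong direction — instead I use $\sum_k b_k^{q/s}\le(\sum_k b_k)^{q/s}$ with $b_k=\big(\mu(\Delta(z_k,r))\big)^{s/q}(1-|z_k|^2)^{-s-2}\int_{\Delta(z_k,r)}|f|^s\,dA$, after absorbing the $\mu$-factor: write $(\mu(\Delta(z_k,r)))(1-|z_k|^2)^{-q-2q/s}=\big[(\mu(\Delta(z_k,r)))^{s/q}(1-|z_k|^2)^{-s-2}\big]^{q/s}$. This gives
\begin{equation*}
\int_{\D}|f'(z)|^q\,d\mu(z)\;\lesssim\;\Big(\sum_k\big(\mu(\Delta(z_k,r))\big)^{s/q}(1-|z_k|^2)^{-s-2}\int_{\Delta(z_k,r)}|f(\zeta)|^s\,dA(\zeta)\Big)^{q/s}.
\end{equation*}
Finally, for $\zeta\in\Delta(z_k,r)$ we have $1-|z_k|^2\asymp 1-|\zeta|^2$ and, by the bounded overlap of $\{\Delta(z_k,r)\}$ and the fact that a fixed dilate of $\Delta(z_k,r)$ contains $\Delta(\zeta,r)$ up to constants, $\big(\mu(\Delta(z_k,r))\big)^{s/q}\lesssim\big(\mu(\Delta(\zeta,r'))\big)^{s/q}$ for a slightly larger $r'$; since the statement allows any $0<r<1$ we may relabel and conclude the inner sum is $\lesssim\int_{\D}|f(\zeta)|^s\big(\mu(\Delta(\zeta,r))\big)^{s/q}(1-|\zeta|^2)^{-s-2}\,dA(\zeta)$, which yields the claim.

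The main obstacle is the bookkeeping of radii and the monotonicity of $\mu(\Delta(\cdot,r))$ in $r$: one must be careful that enlarging from $\delta$ to $r$ (and back down) is legitimate, that $\mu$ of a pseudohyperbolic disk is comparable to $\mu$ of a fixed dilate with constants depending only on the radii and the overlap constant $K$ from \cite[Chapter 2, Lemma 15]{DS}, and that the exponent arithmetic $q\cdot\frac1s\cdot\frac sq=1$ lines up so the $(1-|z_k|^2)$ powers are exactly $s+2$ inside the $q/s$-th power. The step $\sum b_k^{q/s}\le(\sum b_k)^{q/s}$ genuinely needs $q/s\ge1$, i.e.\ $s\le q$, which is why that hypothesis appears; the strict inequality $p<s$ is only used so that Lemma~\ref{LE} (stated for exponent $p$, here applied with exponent $s$) and the submean value property apply with some slack, and plays no further role here.
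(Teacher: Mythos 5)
Your argument is correct, but it takes a more roundabout route than the paper. The paper's proof is two lines: it starts from the same pointwise estimate $|f'(z)|^s\lesssim(1-|z|^2)^{-s-2}\int_{\Delta(z,r)}|f(\zeta)|^s\,dA(\zeta)$, raises it to the power $q/s\ge1$, integrates against $d\mu$, and then applies Minkowski's integral inequality in $L^{q/s}(d\mu)$ together with the symmetry $\zeta\in\Delta(z,r)\Leftrightarrow z\in\Delta(\zeta,r)$ and $1-|z|\asymp1-|\zeta|$ on $\Delta(\zeta,r)$; this keeps the same radius $r$ on both sides with no lattice at all. Your discretization via a $\delta$-lattice plus the superadditivity $\sum_k b_k^{q/s}\le(\sum_k b_k)^{q/s}$ is the discrete counterpart of that Minkowski step and uses the hypothesis $s\le q$ in exactly the same place, so the two proofs are morally the same; yours just pays extra bookkeeping (bounded overlap, radius inflation from $\delta$ to $r$ to $r'$ and relabeling at the end), which is legitimate but unnecessary here. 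Two small points: you do not need Lemma~\ref{LE} (which controls the \emph{difference} $f'(z)-f'(w)$) for the local estimate of $\sup_{\Delta(z_k,\delta)}|f'|^q$ — the plain subharmonicity/mean-value estimate the paper recalls suffices and is what you actually use; and you are right that the hypothesis $p<s$ plays no role in this lemma, since $p$ does not appear in its conclusion — it only matters in the application, where $s$ is chosen in $(p,q]$ so that Duren's embedding $H^p\subset A^s_{s/p-2}$ can be invoked afterwards.
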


\begin{proof}
Recall the known estimate
$$|f^\prime(z)|^s\lesssim\frac{1}{(1-|z|^2)^{s+2}}\int_{\Delta(z,r)}|f(\zeta)|^sdA(\zeta),~~~\mbox{~~}z\in\D,~~s>0.$$
Then, the above inequality  and Minkowski's  inequality give
\begr
\int_{\D}|f^\prime(z)|^qd\mu(z)&\lesssim&\int_{\D}\left(\frac{1}{(1-|z|^2)^{s+2}}\int_{\Delta(z,r)}|f(\zeta)|^sdA(\zeta)\right)^{\frac{q}{s}}d\mu(z)\nonumber\\
&\lesssim&\left(\int_{\D}|f(\zeta)|^s\frac{(\mu(\Delta(\zeta,r)))^{\frac{s}{q}}}{(1-|\zeta|^2)^{s+2}}dA(\zeta)\right)^{\frac{q}{s}}.  \nonumber
\endr
\end{proof}

\begin{lemma} \label{Cs2}  Let $2\leq p<\infty$  and $0<r<1$. Then
\begr
\int_{\D}|f^\prime(z)|^pd\mu(z)\lesssim\int_{\D}\frac{\Delta|f|^p(\zeta)}{(1-|\zeta|)^p}\mu(\Delta(\zeta,r))dA(\zeta).\nonumber
\endr
\end{lemma}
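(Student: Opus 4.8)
\textbf{Proof proposal for Lemma~\ref{Cs2}.} The plan is to start from the pointwise estimate of Lemma~\ref{L} with $q=p$ (which is admissible precisely because $2\leq p<\infty$), namely
\begr
|f^\prime(z)|^p(1-|z|^2)^p\leq C(p,r)\int_{\Delta(z,r)}\Delta|f|^p(\zeta)\,dA(\zeta),\qquad z\in\D,\nonumber
\endr
and then integrate both sides against $d\mu$. This reduces the claim to controlling
\begr
\int_{\D}\frac{1}{(1-|z|^2)^p}\left(\int_{\Delta(z,r)}\Delta|f|^p(\zeta)\,dA(\zeta)\right)d\mu(z).\nonumber
\endr

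The second step is to interchange the order of integration. Since the pseudo-hyperbolic distance is symmetric, $\zeta\in\Delta(z,r)$ if and only if $z\in\Delta(\zeta,r)$, so Tonelli's theorem (everything is nonnegative) rewrites the above as
\begr
\int_{\D}\Delta|f|^p(\zeta)\left(\int_{\Delta(\zeta,r)}\frac{d\mu(z)}{(1-|z|^2)^p}\right)dA(\zeta).\nonumber
\endr
Now I would invoke the standard fact that $1-|z|^2\asymp 1-|\zeta|^2\asymp 1-|\zeta|$ whenever $z\in\Delta(\zeta,r)$, with comparison constants depending only on $r$; this lets the weight $(1-|z|^2)^{-p}$ be pulled out of the inner integral up to a constant, yielding
\begr
\int_{\Delta(\zeta,r)}\frac{d\mu(z)}{(1-|z|^2)^p}\asymp\frac{\mu(\Delta(\zeta,r))}{(1-|\zeta|)^p}.\nonumber
\endr
Substituting this back produces exactly the right-hand side of the lemma, which completes the argument.

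There is essentially no serious obstacle here: the estimate is a routine combination of the sub-mean-value-type inequality for $|f'|^p$ in Lemma~\ref{L}, Tonelli's theorem, and the bounded oscillation of $1-|z|^2$ over a pseudohyperbolic disk. The only point requiring a modicum of care is verifying that $\mu$ is finite on compact subsets of $\D$ (so that Tonelli applies without ambiguity), which is immediate from the definition (\ref{m}) of the joint pull-back measure, and that the constants absorbed along the way depend only on $p$ and $r$, as asserted.
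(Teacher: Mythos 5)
Your proposal is correct and follows essentially the same route as the paper's (very terse) proof: apply Lemma~\ref{L} with $q=p$, interchange the order of integration by Tonelli's theorem using the symmetry $\zeta\in\Delta(z,r)\iff z\in\Delta(\zeta,r)$, and absorb the weight via $1-|z|^2\asymp1-|\zeta|$ on $\Delta(\zeta,r)$. Nothing further is needed.
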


\begin{proof} By Lemma \ref{L}, Fubini's theorem and the fact that $1-|z|\asymp1-|\zeta|$ for any $\zeta\in\Delta(z,r)$, we get the desired result.
\end{proof}

\begin{lemma}\label{Cs3}  Suppose $0<q<\infty$  and $0<r<1$. Then
\begr
\int_{\D}|f^\prime(z)|^qd\mu(z)\lesssim\int_{\D}|f^\prime(\zeta)|^q\mu(\Delta(\zeta,r))dh(\zeta),\nonumber
\endr
where $dh(z)=\frac{dA(z)}{(1-|z|^2)^2}$ denote the hyperbolic measure.
\end{lemma}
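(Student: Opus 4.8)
The plan is to combine the subharmonicity of $|f'|^q$ with Fubini's theorem, in the same spirit as the proof of Lemma~\ref{Cs2} but without passing through the Laplacian. First I would record the sub-mean value estimate. Since $f'\in H(\D)$, the function $|f'|^q$ is subharmonic for every $q>0$, and because each pseudohyperbolic disk $\Delta(z,r)$ contains a Euclidean disk centered at $z$ whose radius is comparable to $1-|z|^2$, the sub-mean value property gives
$$|f'(z)|^q\lesssim\frac{1}{(1-|z|^2)^2}\int_{\Delta(z,r)}|f'(\zeta)|^q\,dA(\zeta),\qquad z\in\D,$$
with a constant depending only on $q$ and $r$.

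Next I would integrate this inequality against $d\mu$ and interchange the order of integration. Note that $\mu$ is a finite measure: since $\rho\le 1$, the identity (\ref{gs}) with $g\equiv1$ gives $\mu(\D)\le 2A_\alpha(\D)<\infty$, so Tonelli's theorem applies to the non-negative integrand. Because the pseudohyperbolic metric is symmetric, the condition $\zeta\in\Delta(z,r)$ is equivalent to $z\in\Delta(\zeta,r)$, whence
$$\int_{\D}|f'(z)|^q\,d\mu(z)\lesssim\int_{\D}\frac{1}{(1-|z|^2)^2}\left(\int_{\Delta(z,r)}|f'(\zeta)|^q\,dA(\zeta)\right)d\mu(z)=\int_{\D}|f'(\zeta)|^q\left(\int_{\Delta(\zeta,r)}\frac{d\mu(z)}{(1-|z|^2)^2}\right)dA(\zeta).$$

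Finally I would use the standard comparability $1-|z|^2\asymp 1-|\zeta|^2$ valid whenever $z\in\Delta(\zeta,r)$ to pull the weight out of the inner integral, obtaining
$$\int_{\Delta(\zeta,r)}\frac{d\mu(z)}{(1-|z|^2)^2}\asymp\frac{\mu(\Delta(\zeta,r))}{(1-|\zeta|^2)^2},$$
which yields
$$\int_{\D}|f'(z)|^q\,d\mu(z)\lesssim\int_{\D}|f'(\zeta)|^q\,\mu(\Delta(\zeta,r))\,dh(\zeta),$$
as claimed. There is essentially no serious obstacle in this argument; the only points deserving a word of care are the justification of Fubini/Tonelli (covered by non-negativity and $\sigma$-finiteness, with finiteness of $\mu$ as a bonus) and the uniformity of the constants in both the sub-mean value inequality and the comparability estimate, both of which are classical.
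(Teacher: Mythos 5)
Your proposal is correct and follows essentially the same route as the paper: the sub-mean value property of $|f'|^q$, Fubini/Tonelli, and the comparability $1-|z|^2\asymp1-|\zeta|^2$ on $\Delta(\zeta,r)$ to pull out the weight and recognize $dh$. The only cosmetic difference is that the paper applies the sub-mean value inequality on an auxiliary disk $\Delta(z,s)$ before enlarging to radius $r$, which changes nothing.
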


\begin{proof} By the mean value property and Fubini's theorem and the fact that $1-|z|^2\asymp1-|\zeta|^2$ for any $z\in\Delta(\zeta,r)$, we have
\begr
\int_{\D}|f^\prime(z)|^qd\mu(z)&\lesssim& \int_{\D}\frac{1}{(1-|z|^2)^2}\int_{\Delta(z,s)}|f^\prime(\zeta)|^qdA(\zeta)d\mu(z)\nonumber\\
&=&\int_{\D}|f^\prime(\zeta)|^q\int_{\Delta(\zeta,s)}\frac{1}{(1-|z|^2)^2}d\mu(z)dA(\zeta)\nonumber\\
&\lesssim&\int_{\D}|f^\prime(\zeta)|^q\mu(\Delta(\zeta,r))dh(\zeta).\nonumber
\endr
\end{proof}

  By \cite[Lemma 4.30]{Zhu}, for all $a,z,w\in\mathbb D$ with $\rho(z,w)<r$ and any real $s$, we have
$$\left|1-\left(\frac{1-\overline{a}z}{1-\overline{a}w}\right)^s\right|\leq C(s,r)\rho(z,w),$$
and therefore, for all $w,z,a\in\mathbb D$ with $z\in\triangle(a,r)$ and any $s>0$,
$$\left|\frac{1}{(1-\overline{a}z)^s}-\frac{1}{(1-\overline{a}w)^s}\right|\leq C(s,r)\rho(z,w)\left|\frac{1}{(1-\overline{a}z)^s}\right|.  $$
Although the converse inequality does not hold   we have the following   result, which can be found in \cite[Theorem 2.8]{KW} or \cite[Lemma D]{SQL}.
  Since Koo and Wang's result  is stated for the unit ball setting (see \cite{KW}), for the convenience of  readers, and in order to offer no doubt of the validity of the result, we give a proof here.

\begin{lemma}[{\cite[Theorem 2.8]{KW}}] \label{KE}   Suppose $s>1$ and $0<r_0<1$. Then there are $N=N(r_0)>1$ and $C=C(s,r_0)$ such that
\begr
&&\left|\frac{1}{(1-\overline{a}z)^s}-\f{1}{(1-\overline{a}w)^s}\right|+
\left|\frac{1}{(1-t_N\overline{a}z)^s}-\f{1}{(1-t_N\overline{a}w)^s}\right|\nonumber\\
&\geq& C\rho(z,w)\left|\frac{1}{(1-\overline{a}z)^s}\right|, \nonumber
\endr
for all $z\in\triangle(a,r_0)$ with $1-|a|<\f{1}{2N}$, $t_N=1-N(1-|a|)$ and $w\in\D$.
\end{lemma}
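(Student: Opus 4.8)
```latex
\textbf{Proof proposal.}
The plan is to establish the two claimed inequalities separately, with the second (the one producing the extra tail term with $t_N$) being the crux. For the first inequality $\bigl|\tfrac{1}{(1-\overline a z)^s}-\tfrac{1}{(1-\overline a w)^s}\bigr|\geq C\rho(z,w)\bigl|\tfrac{1}{(1-\overline a z)^s}\bigr|$, I would \emph{not} expect it to hold for all $z\in\Delta(a,r_0)$ (indeed the excerpt explicitly says the converse of the upper estimate fails), so the whole point of the lemma is that adding the shifted term with $t_N=1-N(1-|a|)$ repairs this. Thus the real work is a lower bound for the \emph{sum} of the two absolute values.

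First I would reduce to the model case. Write $g_s(u)=(1-u)^{-s}$ and set $u=\overline a z$, $v=\overline a w$. Since $z\in\Delta(a,r_0)$ and $w\in\D$, I want to control $|g_s(u)-g_s(v)|$ and $|g_s(t_N u)-g_s(t_N v)|$ from below in terms of $\rho(z,w)|g_s(u)|=\rho(u/\overline a,v/\overline a)\,|g_s(u)|$ — although it is cleaner to keep $\rho(z,w)$. A convenient device is to use the integral representation
\[
g_s(u)-g_s(v)=s\,(u-v)\int_0^1\frac{dt}{(1-v-t(u-v))^{s+1}},
\]
so that the problem becomes showing that for at least one of the two parameter choices ($a$ or $t_N a$) the integral does not cancel, i.e. its modulus is $\gtrsim |1-\overline a z|^{-s-1}$. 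The geometry is: $1-\overline a z$ and $1-\overline a w$ differ by $\overline a(z-w)$, whose size is comparable to $\rho(z,w)(1-|a|^2)$ on $\Delta(a,r_0)$ (using $|z-w|\asymp\rho(z,w)(1-|a|)$ and $1-|\overline a z|\asymp 1-|a|$), so the difference quotient is of the right order $\rho(z,w)/|1-\overline a z|$ \emph{provided} the segment from $1-\overline a w$ to $1-\overline a z$ does not wind around the origin or pass through a region where the integrand has a phase that makes the integral small.

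The decisive step — and the main obstacle — is the following dichotomy. Either $w$ is ``close'' to $a$ in the pseudohyperbolic sense (say $\rho(a,w)\le \tfrac12$), in which case $1-|\overline a w|\asymp 1-|a|$ as well, the whole segment stays in a sector of bounded aperture away from $0$, the integrand has essentially constant argument there, so the integral is $\asymp |1-\overline a z|^{-s-1}$ and the \emph{first} term alone gives the bound with $\rho(z,w)\asymp |u-v|/(1-|a|)$. Or $w$ is ``far'' from $a$, $\rho(a,w)>\tfrac12$; then $\rho(z,w)$ is bounded below (roughly $\rho(z,w)\gtrsim 1$ since $z\in\Delta(a,r_0)$), so I only need a lower bound $\gtrsim |1-\overline a z|^{-s}$ for the sum, i.e. that not both $g_s(u),g_s(v)$ and $g_s(t_N u),g_s(t_N v)$ nearly coincide. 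Here I use that $|g_s(u)|=|1-\overline a z|^{-s}\asymp (1-|a|)^{-s}$ is large, while the ``far'' point gives $|g_s(v)|$ of possibly comparable or smaller size but with a genuinely different argument, or different modulus; choosing $N=N(r_0)$ large enough, the contraction $t_N$ moves $\overline a z$ to a point with $|1-t_N\overline a z|\asymp N(1-|a|)$, so $|g_s(t_N u)|\asymp (N(1-|a|))^{-s}$, which is much \emph{smaller} than $|g_s(u)|$; hence if the first term $|g_s(u)-g_s(v)|$ were small (forcing $|g_s(v)|\asymp|g_s(u)|$, so $|1-\overline a w|\asymp 1-|a|$, so actually $|1-t_N\overline a w|\gtrsim N(1-|a|)$ too and $|g_s(t_N v)|\asymp (N(1-|a|))^{-s}\ll |g_s(u)|$), then the second term $|g_s(t_N u)-g_s(t_N v)|$ is \emph{also} small, which contradicts $|g_s(u)|$ being large unless in fact $|g_s(u)-g_s(v)|\gtrsim |g_s(u)|$ after all. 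Organizing this contradiction cleanly — tracking the constants so that $N$ can be fixed depending only on $r_0$ and the final constant only on $s$ and $r_0$ — is the technical heart of the argument; once it is in place, combining the two cases (``close'' handled by the difference-quotient estimate, ``far'' handled by the modulus dichotomy) yields the stated inequality, and I would also invoke the already-quoted upper estimate $\bigl|1-(\tfrac{1-\overline a z}{1-\overline a w})^s\bigr|\le C(s,r)\rho(z,w)$ from \cite[Lemma 4.30]{Zhu} wherever it streamlines bounding error terms.
```
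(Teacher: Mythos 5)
There is a genuine gap, and it sits exactly where the lemma is nontrivial. In your ``far'' case the argument is not a proof: you show that \emph{if} the first difference is small then the second difference is also small (both $|g_s(t_Nu)|$ and $|g_s(t_Nv)|$ being of size $(N(1-|a|))^{-s}\ll|g_s(u)|$), and then declare a contradiction with ``$|g_s(u)|$ being large.'' But nothing contradicts anything: both differences being small is a consistent state of affairs, and comparing the \emph{moduli} of the four individual quantities can only ever yield upper bounds on the two differences, never the lower bound you need. Worse, your target in that case (sum $\gtrsim|1-\overline a z|^{-s}$ with a constant not degrading in $N$) is actually unattainable, since the second term is trivially $\le 2(N(1-|a|))^{-s}=2N^{-s}|1-\overline a z|^{-s}$ up to constants; the lemma survives only because $C$ is allowed to depend on $r_0$ through $N$. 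The missing idea is the one the paper's proof is built on: for $w$ inside the Carleson box $S(a/|a|,\sqrt N(1-|a|))$, both $1-t_N\overline a z$ and $1-t_N\overline a w$ lie in the disc of radius $2\sqrt N(1-|a|)$ about the \emph{real} point $N(1-|a|)$, so the ratio $u=(1-t_N\overline a z)/(1-t_N\overline a w)$ has modulus comparable to $1$ and small argument, whence $|1-u^s|\asymp|1-u|\gtrsim\rho(z,w)/N$; this genuine lower bound on the second difference gives the claim with constant of order $N^{-s-1}$. For $w$ outside that box one has $|1-\overline a w|\ge 2|1-\overline a z|$ and the first term alone works by pure modulus comparison. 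That Carleson-box dichotomy, not a pseudohyperbolic one, is the right case split.

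Your ``close'' case ($\rho(a,w)\le\tfrac12$) is also unsafe. For $\zeta\in\Delta(a,r)$ one has $1-\overline a\zeta=(1-|a|^2)/(1-\overline a\lambda)$ with $|\lambda|<r$, so $\arg(1-\overline a\zeta)$ ranges over an interval of length $2\arcsin r$; for $s\ge 3$ and $r_0$ close to $1$ one can choose $z\in\Delta(a,r_0)$ and $w\in\Delta(a,\tfrac12)$ with $|1-\overline a z|=|1-\overline a w|$ and arguments differing by $2\pi/s$, so that $(1-\overline a z)^{-s}=(1-\overline a w)^{-s}$ exactly and the first difference vanishes. Correspondingly, in your integral representation the phase of $(1-v-t(u-v))^{-(s+1)}$ sweeps an angle of order $(s+1)\arcsin r_0$, which wraps around for large $s$, so the ``essentially constant argument'' claim fails and the integral can cancel. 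This is precisely why the shifted term with $t_N$ is needed even for pseudohyperbolically nearby $w$, and why the paper routes \emph{all} of the Carleson-box case through the second term.
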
\msk

\begin{proof} For $a\in\D\backslash\{0\}$ and $0<r<1$, denote $S(a,r) =\{z\in\D:|1-\frac{\overline{a}}{|a|}z|<r\}$. Let $z\in\Delta(a,r_0)$.
Since
\begr
|1-\overline{a}z|=|(1-|a|^2)+\overline{a}(a-z)|<2(1-|a|)+r_0|1-\overline{a}z|,\nonumber
\endr
we have
\begr|1-\overline{a}z|<\frac{2}{1-r_0}(1-|a|).\label{f1}\endr
Take $N=(\frac{10}{1-r_0})^2$. We shall split the proof into two cases.\\
\par Case I. $ w\notin S(\frac{a}{|a|},\sqrt{N}(1-|a|))$. In this case, we have
$$|1-\overline{a}w|\geq|a|\left|1-\frac{\overline{a}}{|a|}w\right|-(1-|a|)\geq(\frac{\sqrt{N}}{2}-1)(1-|a|)\geq\frac{4}{1-r_0}(1-|a|).$$
Combine this with (\ref{f1}), we have
\begr
&&\left|\frac{1}{(1-\overline{a}z)^s}-\f{1}{(1-\overline{a}w)^s}\right|
 \geq \frac{1}{\left|1-\overline{a}z\right|^s}-\f{1}{\left|1-\overline{a}w\right|^s}\nonumber\\
&\geq&\left[\left(\frac{1-r_0}{2}\right)^s-\left(\frac{1-r_0}{4}\right)^s\right]\frac{1}{(1-|a|)^s}
 \geq  C(r_0,s)\rho(z,w)\left|\frac{1}{(1-\overline{a}z)^s}\right|. \nonumber
\endr
\par Case II. $w\in S(\frac{a}{|a|},\sqrt{N}(1-|a|))$. Set $u(a)=\frac{1-t_N\overline{a}z}{1-t_N\overline{a}w}$. If $|u(a)|\leq\frac{1}{2}$ or $|u(a)|\geq 2$,  by using elementary estimates  we easily get that
$$|1-u(a)^s|\gtrsim|1-u(a)|.$$
Now, assume that $\frac{1}{2}\leq|u(a)|\leq 2$. Denote $u(a)=re^{i\theta}$, where $\theta=\arg u(a)$.
Since
$$\left|1-\overline{a}w\right|\leq|a|\left|1-\frac{\overline{a}}{|a|}w\right|+(1-|a|)
<2\sqrt{N}(1-|a|),$$
we have
$$\left|(1-t_N\overline{a}w)-N(1-|a|)\right|\leq|1-\overline{a}w|\leq2\sqrt{N}(1-|a|).$$
By (\ref{f1}), we have
\begr
\left|(1-t_N\overline{a}z)-N(1-|a|)\right|\leq|1-\overline{a}z|\leq\sqrt{N}(1-|a|). \label{f2}
\endr
Thus, $1-t_N\overline{a}z$ and $1-t_N\overline{a}w$ are points inside the disc centered at $N(1-|a|)$ with radius $2\sqrt{N}(1-|a|)$. Fix $0<\epsilon<1$ small enough and choose $N(r_0)$ sufficiently large such that
$$|\arg(1-t_N\overline{a}z)|<\frac{\epsilon}{2}\mbox{~~~,~~}|\arg(1-t_N\overline{a}w)|<\frac{\epsilon}{2}$$
and
$$|\arg\left(1-t_N\overline{a}z\right)^s|<\frac{\epsilon}{2}\mbox{~~~,~~~}|\arg\left(1-t_N\overline{a}w\right)^s|<\frac{\epsilon}{2}.$$
Then $|\theta|<\epsilon$ and $|s\theta|<\epsilon$. Since $\epsilon$ is small enough, we have
\begr
|1-u(a)^s|&\asymp&|1-r^s\cos(s\theta)|+r^s|\sin(s\theta)|\nonumber\\
&=&|1-r^s|+r^s(|\sin(s\theta)|+(1-\cos(s\theta)))\nonumber\\
&\asymp&|1-r^s|+sr^s|\theta|\nonumber\\
&\asymp&|1-r|+|\theta|\nonumber\\
&=&|1-r\cos\theta|+r(|\theta|-(1-\cos\theta))\nonumber\\
&\asymp&|1-r\cos\theta|+r(|\sin\theta|)\nonumber\\
&\asymp&|1-u(a)|.\nonumber
\endr
Therefore
\begr
&&\left|\frac{1}{(1-t_N\overline{a}z)^s}-\f{1}{(1-t_N\overline{a}w)^s}\right|\nonumber\\
&=&\frac{1}{|1-t_N\overline{a}z|^s}|1-u(a)^s| \gtrsim \frac{1}{|1-\overline{a}z|^s}|1-u(a)|\nonumber\\
&\gtrsim&\frac{1}{|1-\overline{a}z|^s}\frac{t_N|a|(z-w)|}{|1-t_N\overline{a}w|} \gtrsim \frac{1}{|1-\overline{a}z|^s}\frac{|z-w|}{|1-\overline{z}w|},\nonumber
\endr
where we used the fact that
$$|1-t_N\overline{a}z|\lesssim|1-\overline{a}z|,|1-t_N\overline{a}w|\lesssim|1-\overline{a}w|~~~\mbox{~~~and~~~}|1-\overline{a}w|\asymp|1-\overline{z}w|.$$
The proof is complete. \end{proof}

\section{  $D_{\varphi}-D_{\psi}$ from $H^p$ to $A^q_\alpha$}

We will split Theorem \ref{main} into two theorems, i.e., Theorem 14 and Theorem 15.

\begin{thm} \label{Th2} Suppose $\varphi$ and $\psi$ are analytic self-maps of $\D$.
 Let either $0<p<q< \infty$ or $2\leq p=q$, $0<r<1$ and $\mu$ denote the joint pull-back measure  induced by $\varphi$ and $\psi$ defined as (\ref{m}).

(a) $D_\varphi-D_\psi:H^p\to A^q_\alpha$ is bounded if and only if
\begr
\sup_{z\in\D}\frac{\mu(\Delta(z,r))}{(1-|z|^2)^{\frac{q}{p}+q}}<\infty.
\nonumber\endr
Moreover,
$$\|D_\varphi-D_\psi\|_{H^p\to A^q_\alpha}\asymp
\sup_{z\in\D}\frac{\mu(\Delta(z,r))}{(1-|z|^2)^{\frac{q}{p}+q}}.
\nonumber$$

(b) $D_\varphi-D_\psi:H^p\to A^q_\alpha$ is compact if and only if
\begr\lim_{|z|\to1}\frac{\mu(\Delta(z,r))}{(1-|z|^2)^{\frac{q}{p}+q}}=0.
\nonumber\endr
\end{thm}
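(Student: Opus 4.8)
\medskip
\noindent\textbf{Proof proposal.}
The plan is to reduce both $(a)$ and $(b)$ to a local ``close--far'' decomposition of $|f'(\varphi(z))-f'(\psi(z))|$ together with a well-chosen pair of extremal functions. First I would fix $0<s<r$ and split $\D=E_1\cup E_2$, with $E_1=\{z:\rho(z)<s\}$ and $E_2=\{z:\rho(z)\ge s\}$. On $E_1$ one has $\psi(z)\in\Delta(\varphi(z),s)$, so Lemma~\ref{LE} (with exponent $q$) bounds $|f'(\varphi(z))-f'(\psi(z))|^q$ by $C\rho(z)^q(1-|\varphi(z)|^2)^{-(q+2)}\int_{\Delta(\varphi(z),r)}|f|^q\,dA$; on $E_2$ the factor $\rho(z)^q\ge s^q$ is harmless, so $|f'(\varphi(z))-f'(\psi(z))|^q\lesssim\rho(z)^q\big(|f'(\varphi(z))|^q+|f'(\psi(z))|^q\big)$, and the sub-mean value property of $|f'|^q$ finishes that piece. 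Inserting these estimates, applying the change of variables (\ref{gs}), and then Fubini (using $1-|w|\asymp1-|\zeta|$ for $w\in\Delta(\zeta,r)$) should give, in all of case $(i)$,
$$\|(D_\varphi-D_\psi)f\|_{A^q_\alpha}^q\ \lesssim\ \int_\D|f(\zeta)|^q\,\frac{\mu(\Delta(\zeta,r))}{(1-|\zeta|^2)^{q+2}}\,dA(\zeta)=:\int_\D|f|^q\,d\omega .$$

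\smallskip
For the sufficiency the two sub-ranges of case $(i)$ need different arguments. When $0<p<q$ I would set $M=\sup_z\mu(\Delta(z,r))/(1-|z|^2)^{q/p+q}$ and estimate $\omega(Q_h(\zeta_0))\le M\int_{Q_h(\zeta_0)}(1-|\zeta|^2)^{q/p-2}\,dA(\zeta)\asymp Mh^{q/p}$, which is legitimate because $q/p>1$; thus $\omega$ is a $(q/p)$-Carleson measure for $H^p$ with constant $\lesssim M$, and the (generalized) Carleson embedding theorem for Hardy spaces \cite{Duren} gives $\int_\D|f|^q\,d\omega\lesssim M\|f\|_{H^p}^q$. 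When $2\le p=q$ the radial integral $\int_{Q_h}(1-|\zeta|^2)^{-1}\,dA$ diverges and this argument breaks down, so instead I would push the local estimate one step further: combining (\ref{DL}) from the proof of Lemma~\ref{LE} with Lemma~\ref{L} (available since $q\ge2$) yields, on $E_1$, $|f'(\varphi(z))-f'(\psi(z))|^q\lesssim\rho(z)^q(1-|\varphi(z)|^2)^{-q}\int_{\Delta(\varphi(z),r)}\Delta|f|^q\,dA$, while Lemma~\ref{L} alone controls the $E_2$-piece; repeating the change of variables and Fubini then gives $\|(D_\varphi-D_\psi)f\|_{A^q_\alpha}^q\lesssim\int_\D\Delta|f|^q(\zeta)\,\mu(\Delta(\zeta,r))(1-|\zeta|^2)^{-q}\,dA(\zeta)\le M\int_\D\Delta|f|^q(\zeta)(1-|\zeta|^2)\,dA(\zeta)$. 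Since $(D_\varphi-D_\psi)f$ depends only on $f'$ I may take $f(0)=0$, so by Calder\'on's theorem (Lemma~\ref{eq} with $p=q$) and the identity $\int_\D g(z)(1-|z|)\,dA(z)\asymp\int_{\partial\D}\int_{\Gamma(\zeta)}g(z)\,dA(z)\,dm(\zeta)$ the last integral is $\asymp M\|f\|_{H^q}^q$. Either way $\|D_\varphi-D_\psi\|_{H^p\to A^q_\alpha}^q\lesssim M$.

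\smallskip
The necessity, together with the matching lower bound $M\lesssim\|D_\varphi-D_\psi\|^q$, is where the real work lies, and I expect it to be the main obstacle. A single extremal $f_a(z)=(1-|a|^2)(1-\overline a z)^{-1-1/p}$, normalized so $\|f_a\|_{H^p}\asymp1$, will \emph{not} in general detect $\mu(\Delta(a,r_0))$, because $f_a'(\varphi(z))$ and $f_a'(\psi(z))$ can cancel. To break the cancellation I would test on the \emph{pair} $\{f_a,f_{t_Na}\}$, where $t_N=1-N(1-|a|)$ and $N=N(r_0)$ is the constant from Lemma~\ref{KE}; both have $H^p$-norm $\asymp1$. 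For $|a|$ close to $1$ and every $z$ with $\varphi(z)\in\Delta(a,r_0)$, Lemma~\ref{KE} with $s=2+1/p>1$, $z\leftarrow\varphi(z)$, $w\leftarrow\psi(z)$ should give, after undoing the normalizations,
$$|f_a'(\varphi(z))-f_a'(\psi(z))|+|f_{t_Na}'(\varphi(z))-f_{t_Na}'(\psi(z))|\ \gtrsim\ \frac{\rho(z)}{(1-|a|^2)^{1+1/p}} .$$
Raising to the $q$-th power, integrating over $\varphi^{-1}(\Delta(a,r_0))$ against $dA_\alpha$ and adding the symmetric estimate over $\psi^{-1}(\Delta(a,r_0))$, I would obtain
$$\frac{\mu(\Delta(a,r_0))}{(1-|a|^2)^{q/p+q}}\ \lesssim\ \|(D_\varphi-D_\psi)f_a\|_{A^q_\alpha}^q+\|(D_\varphi-D_\psi)f_{t_Na}\|_{A^q_\alpha}^q .$$
If $D_\varphi-D_\psi$ is bounded the right-hand side is $\lesssim\|D_\varphi-D_\psi\|^q$; since $\rho\le1$ forces $\mu(\D)\le 2A_\alpha(\D)<\infty$, the values of $a$ bounded away from $\partial\D$ are trivial, and a standard covering argument transfers the estimate from radius $r_0$ to the prescribed $r$, proving $(a)$.

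\smallskip
Part $(b)$ runs on the same two mechanisms. For the sufficiency, given $\{f_k\}$ bounded in $H^p$ with $f_k\to0$ uniformly on compacta, I would split the integral bounds above over $\{|\zeta|\le\rho_0\}$ and $\{|\zeta|>\rho_0\}$: the hypothesis $\mu(\Delta(z,r))/(1-|z|^2)^{q/p+q}\to0$ makes the outer part arbitrarily small while $f_k\to0$ u.c.c.\ kills the inner part, giving $\|(D_\varphi-D_\psi)f_k\|_{A^q_\alpha}\to0$. For the necessity, for any sequence $|a_k|\to1$ the functions $f_{a_k}$ and $f_{t_Na_k}$ tend to $0$ uniformly on compacta and stay $H^p$-bounded, so compactness forces the two norms on the right of the displayed lower bound to go to $0$, whence $\mu(\Delta(a_k,r_0))/(1-|a_k|^2)^{q/p+q}\to0$; the covering argument again passes to the radius $r$. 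Throughout, the only genuinely non-routine input is Lemma~\ref{KE}: the recognition that the difference structure demands a two-function test family whose combined ``derivative separation'' at $a$ and at $t_Na$ still sees the full pseudohyperbolic distance $\rho(z)$.
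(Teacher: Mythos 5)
Your proposal is correct and follows essentially the same route as the paper's proof: the same close/far decomposition in $\rho$, the same local mean-value estimates (exponent $q$ for $p<q$, the $\Delta|f|^q$ version via Lemma \ref{L} for $p=q\ge 2$), and, crucially, the same two-function test family $\{f_a,f_{t_Na}\}$ combined with Lemma \ref{KE} to defeat the cancellation in the necessity direction; your use of the $(q/p)$-Carleson-measure form of Duren's theorem and of Calder\'on's area theorem in place of the Hardy--Stein--Spencer identity are equivalent reformulations of the paper's steps. The one point you leave slightly short is the ``Moreover'' norm equivalence for $|a|$ bounded away from $\partial\D$: finiteness of $\mu(\D)$ is not by itself $\lesssim\|D_\varphi-D_\psi\|^q$, and the paper closes this gap by testing on $f(z)=z^2$, which yields $\mu(\D)\lesssim\|(D_\varphi-D_\psi)(z^2)\|_{A^q_\alpha}^q\lesssim\|D_\varphi-D_\psi\|^q$.
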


\begin{proof}
(a) First, consider the lower bound. Suppose that $D_\varphi-D_\psi: H^p\to A^q_\alpha$ is bounded. Let $$k_a(z)=\big(\frac{1-|a|^2}{(1-\overline{a}z)^2}\big)^{\frac{1}{p}}, ~~~~~~~ k_{a_N}(z)=\big(\frac{1-|a|^2}{(1-t_N\overline{a}z)^2}\big)^{\frac{1}{p}},$$  where $|a|>r_1=1-\frac{1}{2N}$, $t_N$ and $N$ are defined as Lemma \ref{KE}. By Lemma \ref{KE}, we get
\begr
&&\|(D_\varphi-D_\psi)k_a\|_{A^q_\alpha}^q+\|(D_\varphi-D_\psi)k_{a_N}\|_{A^q_\alpha}^q\nonumber\\
&\gtrsim&\int_{\varphi^{-1}(\Delta(a,r))}\frac{\rho(\xi)^q}{(1-|a|^2)^{\frac{q}{p}+q}}dA_\alpha(\xi)\label{2.1}
\endr
and
\begr
&&\|(D_\varphi-D_\psi)k_a\|_{A^q_\alpha}^q+\|(D_\varphi-D_\psi)k_{a_N}\|_{A^q_\alpha}^q\nonumber\\
&\gtrsim&\int_{\psi^{-1}(\Delta(a,r))}\frac{\rho(\xi)^q}{(1-|a|^2)^{\frac{q}{p}+q}}dA_\alpha(\xi).\label{2.2}
\endr
Therefore,
\begr
\|D_\varphi-D_\psi\|^q&\gtrsim&\sup_{a\in\D}\left(\|(D_\varphi-D_\psi)k_a\|_{A^q_\alpha}^q+\|(D_\varphi-D_\psi)k_{a_N}\|_{A^q_\alpha}^q\right)\nonumber\\
&\gtrsim&
\sup_{|a|>r_1}\frac{\mu(\Delta(a,r))}{(1-|a|^2)^{\frac{q}{p}+q}}.  \nonumber
\endr
For $|a|\leq r_1$, take $r_2=\f{r+r_1}{1+rr_1}$, then
$\triangle(a,r)\subset\triangle(0,r_2)$. Therefore
\begr
&&\frac{\mu(\triangle(a,r))}{(1-|a|^2)^{q/p+q}}\nonumber\\
&\leq& \frac{1}{(1-r_1^2)^{q/p+q}} \left(\int_{\varphi^{-1}(\triangle(a,r))}\rho(z)^qdA_\alpha(z)+\int_{\psi^{-1}(\triangle(a,r))}\rho(z)^qdA_\alpha(z)\right)\nonumber\\
&\leq& \frac{1}{(1-r_1^2)^{q/p+q}}\left(\int_{\varphi^{-1}(\triangle(a,r))}+\int_{\psi^{-1}(\triangle(a,r))}\right)\f{|\varphi(z)-\psi(z)|^q}{(1-r_2)^q}dA_\alpha(z)\nonumber\\
&\lesssim& \|(D_\varphi-D_\psi)(z^2)\|_{A^q_\alpha}^q\nonumber\\
&\lesssim& \|D_\varphi-D_\psi\|^q.\label{cp}
\endr

Next we will consider the upper bound. For $f\in H^p$, consider
\begr
\|(D_\varphi-D_\psi)f\|_{A^q_\alpha}^q
&=&\big(\int_{\rho(z)\geq \frac{1}{2}}+\int_{\rho(z)<\frac{1}{2}}\big)|f^\prime\circ\varphi(z)-f^\prime\circ\psi(z)|^qdA_\alpha(z)\nonumber\\
&:=&I_1+I_2.\label{A}
\endr
The first term is uniformly bounded.
\begr
I_1
\leq2^{2q}\int_{\D}|f^\prime(z)|^qd\mu(z). ~~~~~\, ~~~~~\, ~~~~~\, ~~~~~\, ~~~~~\, ~~~~~\, ~~~~~\, ~~~~~\, ~~~~~\, ~~~~~\, ~~~~~\, ~~~~~\, ~~~~~\, ~~~~~\, ~~~~~\, ~~~~~\, \label{I1}
\endr

We shall split the proof into two cases.

\par {\bf Case $0<p<q<\infty$.}  Let $0<r<1$ be fixed. Choose $s\in(p,q]$. By Lemma \ref{LE}, we get
 \begr
I_2
&\lesssim&\int_{\rho(z)<\frac{1}{2}}|\rho(z)|^q\left(\int_{\Delta(\varphi(z),r)}\frac{|f(w)|^s}{(1-|w|^2)^{s+2}}dA(w)\right)^{\frac{q}{s}}dA_\alpha(z). \nonumber
\endr
So, by  Minkowski's inequality we obtain
\begr
I_2
&\lesssim&\int_{\D}\rho(z)^q\left(\int_{\Delta(\varphi(z),r)}\frac{|f(w)|^s}{(1-|w|^2)^{s+2}}dA(w)\right)^{\frac{q}{s}}dA_\alpha(z)\nonumber\\
&\lesssim&\left(\int_{\D}|f(w)|^s\frac{\left(\int_{\varphi^{-1}(\Delta(w,r))}\rho(z)^qdA_\alpha(z)\right)^{\frac{s}{q}}}{(1-|w|^2)^{s+2}}dA(w)\right)^\frac{q}{s}\nonumber\\
&\leq&\left(\int_{\D}|f(w)|^s\frac{\left(\mu(\Delta(w,r))\right)^{\frac{s}{q}}}{(1-|w|^2)^{s+2}}dA(w)\right)^\frac{q}{s}. \label{Cs1I2}
\endr
Therefore, by (\ref{A}), (\ref{I1}), (\ref{Cs1I2}) and Lemma \ref{Cs1}, we get
\begr
&& \|(D_\varphi-D_\psi)f\|_{A^q_\alpha}^q\nonumber\\
&\lesssim&\left(\int_{\D}|f(w)|^s\frac{\left(\mu(\Delta(w,r))\right)^{\frac{s}{q}}}{(1-|w|^2)^{s+2}}dA(w)\right)^\frac{q}{s}\label{C1}\\
&\lesssim&\sup_{z\in\D}\frac{\mu(\Delta(z,r))}{(1-|z|^2)^{\frac{q}{p}+q}}\left(\int_{\D}|f(w)|^s(1-|w|^2)^{\frac{s}{p}-2}dA(w)\right)^\frac{q}{s}. \nonumber
\endr
Since $s>p$, by Duren's theorem we have $\|f\|_{A^s_{\frac{s}{p}-2}}\lesssim\|f\|_{H^p}.$
Therefore
$$\|D_\varphi-D_\psi\|^q\lesssim\sup_{z\in\D}\frac{\mu(\Delta(z,r))}{(1-|z|^2)^{\frac{q}{p}+q}}.$$
This completes the proof of the case $0<p<q<\infty.$\\

  {\bf Case $q=p\geq2$.} By (\ref{DL}), Lemma \ref{L} and Fubini's theorem, we get
{\small \begr
&&I_2= \int_{\rho(z)<\frac{1}{2}}|f^\prime\circ\varphi(z)-f^\prime\circ\psi(z)|^qdA_\alpha(z)\nonumber\\
&\lesssim&\int_{\rho(z)<\frac{1}{2}}\rho(z)^q\left(\int_{\Delta(\varphi(z),s_1)}\frac{|f^\prime(w)|^q}{(1-|w|^2)^{2}}dA(w)\right)dA_\alpha(z)\label{I2}\\
&\lesssim&\int_{\D}\frac{\rho(z)^q}{(1-|\varphi(z)|^2)^{2+q}}\left(\int_{\Delta(\varphi(z),s_1)}\left(\int_{\Delta(w,s_2)}\Delta|f|^q(u)dA(u)\right)dA(w)\right) dA_\alpha(z)\nonumber\\
&\lesssim&\int_{\D}\frac{\rho(z)^q}{(1-|\varphi(z)|^2)^{q}}\left(\int_{\Delta(\varphi(z),r)}\Delta|f|^q(u)dA(u)\right) dA_\alpha(z)\nonumber\\
&\lesssim&\int_{\D}\frac{\Delta|f|^q(u)}{(1-|u|^2)^{q}}\mu(\Delta(u,r))dA(u), \nonumber
\endr  }
where $s_1, s_2\in(0,1)$ are chosen sufficiently small depending only on $r$, for example, we can take $s_1=s_2=\frac{1-\sqrt{1-r^2}}{r}$.
From this with (\ref{A}), (\ref{I1}) and  Lemma \ref{Cs2}, we have
 \begr
\|(D_\varphi-D_\psi)f\|_{A^q_\alpha}^q\lesssim\int_{\D}\frac{\Delta|f|^q(u)}{(1-|u|^2)^{q}}\mu(\Delta(u,r))dA(u).\label{C2}
\endr
Using the Hardy-Stein-Spencer identity
 $$\|f||_{H^p}^p=|f(0)|^p+\frac{1}{2}\int_{\D}\Delta|f(z)|^p\log\frac{1}{|z|}dA(z),$$ and the fact that $p=q$,
 the proof can be finished as in the previous case.\msk

(b)
We now turn to the proof of the compactness.

If $D_\varphi-D_\psi:H^p\to A^q_\alpha$ is compact, then one may deduce from (\ref{2.1}), (\ref{2.2}) that
$$\lim_{|a|\to1}\frac{\mu(\Delta(a,r))}{(1-|a|^2)^{\frac{q}{p}+q}}=0.$$
Now, suppose
$$\lim_{|a|\to1}\frac{\mu(\Delta(a,r))}{(1-|a|^2)^{\frac{q}{p}+q}}=0.$$
To prove the compactness of $D_\varphi-D_\psi:H^p\to A^q_\alpha$, we consider an arbitrary sequence $\{f_n\}$ in $H^p$ such that $\|f_n\|_{H^p}\leq1$ and $f_n\to0$ uniformly on compact subsets of $\D$. It is enough to show that
\begr\lim_{n\to\infty}\|(D_\varphi-D_\psi)f_n\|_{A^q_\alpha}=0.\label{T}\endr
We first consider the case $0<p<q<\infty$. Let $t\in(0,1)$.   By (\ref{C1}) we have
\begr
&&\|(D_\varphi-D_\psi)f_n\|_{A^q_\alpha}^q\nonumber\\
&\lesssim&\left(\int_{t\D}+\int_{\D\backslash t\D}|f_n(w)|^s\frac{\left(\mu(\Delta(w,r))\right)^{\frac{s}{q}}}{(1-|w|^2)^{s+2}}dA(w)\right)^\frac{q}{s}.
\endr
Note that $\mu$ is a finite measure,   thus
$$\sup_{a\in t\D}\frac{\mu(\Delta(a,r))}{(1-|a|^2)^{\frac{q}{p}+q}}<\infty.$$
Now, since $f_n\to0$ uniformly on $t\D$, we obtain
$$\lim_{n\to\infty}\int_{t\D}|f_n(w)|^s\frac{\left(\mu(\Delta(w,r))\right)^{\frac{s}{q}}}{(1-|w|^2)^{s+2}}dA(w)=0.$$
On the other hand, we have
\begr
&&\left(\int_{\D\backslash t\D}|f_n(w)|^s\frac{\left(\mu(\Delta(w,r))\right)^{\frac{s}{q}}}{(1-|w|^2)^{s+2}}dA(w)\right)^\frac{q}{s}\nonumber\\
&\lesssim&\sup_{z\in\D\backslash t\D}\frac{\mu(\Delta(z,r))}{(1-|z|^2)^{\frac{q}{p}+q}}\left(\int_{\D}|f_n(w)|^s(1-|w|^2)^{\frac{s}{p}-2}dA(w)\right)^\frac{q}{s}\nonumber\\
&\lesssim&\sup_{z\in\D\backslash t\D}\frac{\mu(\Delta(z,r))}{(1-|z|^2)^{\frac{q}{p}+q}}. \nonumber
\endr
Letting $t\to1$, we get (\ref{T}), as desired.

The case $2<p=q<\infty$. By an similar argument, using (\ref{C2}) and the Hardy-Stein-Spencer identity, we deduce (\ref{T}), as desired.
\end{proof}
\msk

\begin{thm}\label{Th3} Suppose $\varphi$ and $\psi$ are analytic self-maps of $\D$.  Let $0<r<1$, $0<q\leq p< \infty$, and let $\mu$ denote the joint pull-back measure  induced by $\varphi$ and $\psi$ defined as (\ref{m}).\\
(i)~If $q<\min\{2,p\}$, then the following conditions are equivalent:

(ia) $D_\varphi-D_\psi:H^p\to A^q_\alpha$ is bounded;

(ib) $D_\varphi-D_\psi:H^p\to A^q_\alpha$ is compact;

(ic) The function \begr
\zeta\mapsto\left(\int_{\Gamma(\zeta)}\left(\frac{\mu(\Delta(z,r))}{(1-|z|^2)^{1+q}}\right)^{\frac{2}{2-q}}\frac{dA(z)}{(1-|z|)^2}\right)^{\frac{2-q}{2}}
\nonumber
\endr
\par belongs to $L^{\frac{p}{p-q}}(\partial\D,m)$.\msk\\
(ii)~~~If~$0<q=p<2$, then

(iia) $D_\varphi-D_\psi:H^p\to A^q_\alpha$ is bounded if and only if
\begr \left(\frac{\mu(\Delta(z,r))}{(1-|z|^2)^{1+p}}\right)^{\frac{2}{2-p}}\frac{dA(z)}{1-|z|} ~~ \nonumber \endr
\par is a Carleson measure.\msk

(iib) $D_\varphi-D_\psi:H^p\to A^q_\alpha$ is compact if and only if
\begr \left(\frac{\mu(\Delta(z,r))}{(1-|z|^2)^{1+p}}\right)^{\frac{2}{2-p}}\frac{dA(z)}{1-|z|} ~~ \nonumber \endr
\par is a  vanishing Carleson measure.\msk\\
(iii)~If $2\leq q<p<\infty$, then

(iiia) $D_\varphi-D_\psi:H^p\to A^q_\alpha$ is bounded if and only if the function
 \begr
\zeta\mapsto\sup_{z\in\Gamma(\zeta)}\frac{\mu(\Delta(z,r))}{(1-|z|^2)^{1+q}}
\nonumber
\endr
\par belongs to $L^{\frac{p}{p-q}}(\partial\D,m)$.

(iiib) $D_\varphi-D_\psi:H^p\to A^q_\alpha$ is compact if and only if the function
 \begr
\zeta\mapsto\sup_{z\in\Gamma(\zeta)\cap\{|z|\geq r\}}\frac{\mu(\Delta(z,r))}{(1-|z|^2)^{1+q}}
\nonumber
\endr
\par converges to zero in $L^{\frac{p}{p-q}}(\partial\D,m)$ as $r\to1^-$.
\end{thm}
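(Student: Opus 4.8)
The plan is to reduce the behaviour of $D_\varphi-D_\psi$ to an embedding question for the pull-back measure $\mu$, and then to invoke the tent-space machinery collected in Section 2. The starting point is the two-sided control of $\|(D_\varphi-D_\psi)f\|_{A^q_\alpha}^q$ by $\int_\D|f'|^qd\mu$ plus a ``diagonal'' term: the upper bound comes from splitting $\D$ into $\{\rho(z)\ge\frac12\}$ and $\{\rho(z)<\frac12\}$ exactly as in the proof of Theorem \ref{Th2}, using Lemma \ref{LE} on the near-diagonal part, while the lower bound is obtained by testing on the normalized kernels $k_a$ and $k_{a_N}$ and applying Lemma \ref{KE}, again as in Theorem \ref{Th2}(a). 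The outcome I expect is that, in every case of the theorem, boundedness (resp.\ compactness) of $D_\varphi-D_\psi:H^p\to A^q_\alpha$ is equivalent to the embedding $H^p\hookrightarrow L^q(d\mu)$ via $f\mapsto f'$, equivalently (discretizing via a $\delta$-lattice $\{z_k\}$ with $\inf_k|z_k|\ge r$ from Lemma \ref{e}) to the boundedness of the sequence operator $\{f(z_k)\}\mapsto\big(f'(z_k)(1-|z_k|)\big)$ from the tent space $T^p_2(\{z_k\})$-type data into the weighted $\ell^q$ space governed by the weights $\mu(\Delta(z_k,r))/(1-|z_k|^2)^{q}$. The reproducing operator $S_\lambda$ of Lemma \ref{SO} provides the needed converse: it maps the relevant tent space onto $H^p$, so testing $D_\varphi-D_\psi$ against $S_\lambda$ of a sequence transfers the operator estimate into a tent-space estimate for the weights, and Lemmas \ref{LE}–\ref{Cs3} transfer it back.

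For part (i), $q<\min\{2,p\}$: here $q<2$, so after discretization the target is an $\ell^q$-sum with $q<2$; by duality (Hölder with exponent $2/q$, i.e.\ the $T^p_2$–$T^{p'}_{2'}$ and, in the sequence setting, $T^p_q$–$T^{p'}_\infty$ dualities of Lemmas \ref{dual}, \ref{dual2}) the embedding is equivalent to a $T^{p/(p-q)}_{2/(2-q)}$-type summability condition on the weight sequence, which unwinds to the stated membership of $\zeta\mapsto\big(\int_{\Gamma(\zeta)}(\mu(\Delta(z,r))/(1-|z|^2)^{1+q})^{2/(2-q)}\,dA(z)/(1-|z|)^2\big)^{(2-q)/2}$ in $L^{p/(p-q)}(\partial\D,m)$; the area-integral/Fubini identity of Section 2.3 and Lemma \ref{eq} (Calderón) are used to pass between the continuous and discrete forms. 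The equivalence (ia)$\Leftrightarrow$(ib) is then automatic: when $q<2$ the relevant inclusion of tent/sequence spaces is ``automatically compact'' — a bounded embedding of this Luecking type forces the vanishing condition, because finiteness of the $L^{p/(p-q)}$ norm of an area function already yields the tail decay needed for uniform-on-compacta convergence to pass to the limit (the standard $t\D$ versus $\D\setminus t\D$ splitting as in Theorem \ref{Th2}(b)). Part (ii), $0<q=p<2$, is the same analysis with $p/(p-q)=\infty$: the $L^\infty$ condition on the area function is, by the tent-space description recalled in Section 2.4 (namely $f\in T^\infty_{2/(2-p)}$ iff $|f|^{2/(2-p)}(1-|z|)d(\mathrm{area})$ is Carleson), exactly the assertion that $(\mu(\Delta(z,r))/(1-|z|^2)^{1+p})^{2/(2-p)}\,dA(z)/(1-|z|)$ is a Carleson measure, with the vanishing version giving compactness. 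Part (iii), $2\le q<p$: now $q\ge2$, so Lemma \ref{L} and Lemma \ref{Cs2} let me pass from $|f'|^q$ to $\Delta|f|^q$ against the weight $\mu(\Delta(\cdot,r))/(1-|\cdot|)^q$; by Lemma \ref{eq} this is controlled by the $T^{p/q}_1$-type norm of that weight against $\Delta|f|^q\,dA$, and duality ($T^{p/q}_1$ versus $T^{(p/q)'}_\infty$) yields the $\sup$-over-$\Gamma(\zeta)$ condition in $L^{p/(p-q)}(\partial\D,m)$; the compactness statement is the corresponding ``tail'' version, where restricting to $|z|\ge r$ and letting $r\to1^-$ captures the vanishing.

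The main obstacle I anticipate is the sharp lower bound in the non-uniform ranges, i.e.\ producing, for a given lattice point $z_k$, a test function in $H^p$ that simultaneously (a) has prescribed values $f(z_k)$ realizing an arbitrary element of the relevant tent space and (b) has a \emph{derivative} of controlled size off the lattice so that the diagonal term and the $\rho\ge\frac12$ term in \eqref{A} do not swamp the main contribution. This is exactly where Lemma \ref{KE} (the perturbed-kernel lower bound with the extra point $t_N\overline a$) and the separated-sequence stability Lemma \ref{S} are essential: one uses $S_\lambda$ applied to a sequence \emph{together with} its $t_N$-dilated companion lattice $\{b_k\}$, estimates $(D_\varphi-D_\psi)S_\lambda$ from below on each $\Delta(z_k,r)$ using Lemma \ref{KE}, and then sums; keeping the two lattices separated (Lemma \ref{S}) and absorbing the cross terms is the delicate bookkeeping. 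The remaining routine points — finiteness of $\mu$ forcing the small-$|z|$ part of every condition to hold automatically (cf.\ \eqref{cp}), and the interchange of continuous and discrete formulations — follow the template already set in Theorem \ref{Th2} and in \cite{LSS,P,PR}.
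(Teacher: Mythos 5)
Your proposal follows essentially the same route as the paper: the upper bound via the splitting into $\rho\ge\tfrac12$ and $\rho<\tfrac12$ together with the local estimates (Lemmas \ref{LE}--\ref{Cs3}) and Calder\'on's theorem, the lower bound via $S_\lambda$ and its $t_N$-dilated companion combined with Lemma \ref{KE} (the paper decouples the resulting sums with Rademacher functions and Khinchine's inequality, which is the ``delicate bookkeeping'' you allude to), and the case analysis through the tent-space dualities of Lemmas \ref{dual} and \ref{dual2}, including the dominated-convergence/tail argument that makes boundedness and compactness coincide when $q<\min\{2,p\}$. No substantive gap; the sketch matches the paper's proof.
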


\begin{proof}  By Lemma \ref{e}, there exists a $\delta$-lattice $\{z_k\}$ such that $\inf_{k}|z_k|>1-\frac{1}{2N}$, where $N$ is defined  as Lemma \ref{KE}.  Let $$\mathcal{C}T^p_2(\{z_k\})=\{f\in T^p_2(\{z_k\}):\|f\|_{T^p_2(\{z_k\})}=1\}.$$
  For each $\lambda>\lambda_0$ ($\lambda_0$ is that of Lemma \ref{SO} ), set
$$S_{\lambda}(f)(z)=\sum_{k=1}^\infty f(z_k)\left(\frac{1-|z_k|}{1-\overline{z_k}z}\right)^\lambda,~~\mbox{~~}f\in\mathcal{C}T^p_2(\{z_k\}),~~z\in\D,$$
and
$$S_{\lambda,N}(f)(z)=\sum_{k=1}^\infty f(z_k)\left(\frac{1-|z_k|}{1-t_{k,N}\overline{z_k}z}\right)^\lambda,~~\mbox{~~}f\in\mathcal{C}T^p_2(\{z_k\}),~~z\in\D, $$
where $t_{k,N}=1-N(1-|z_k|)$.
By Lemmas \ref{S} and  \ref{SO}, we have
$$\|S_{\lambda}(f)\|_{H^p}\lesssim\|f\|_{T^p_2(\{z_k\})}~~~\mbox{~~and~~}\|S_{\lambda,N}(f)\|_{H^p}\lesssim\|f\|_{T^p_2(\{z_k\})}.$$
Therefore,
\begr
\int_{\D}\left|(D_\varphi-D_\psi)S_{\lambda}(f)(z)\right|^qdA_\alpha(z)\lesssim\|D_\varphi-D_\psi\|^q\|f\|_{T^p_2(\{z_k\})}^q.\nonumber
\endr
Let $g_t\in T^p_2(\{z_k\})$ such that $g_t(z_k)=f(z_k)r_k(t)$, where $r_k$ denotes the $k$th Rademacher function. Replace  $f(z_k)$ by $g_t(z_k)$ in the above inequality, and integrate with respect to $t$ we obtain
\begr
\int_0^1\int_{\D}\left|(D_\varphi-D_\psi)S_{\lambda}(g_t)(z)\right|^qdA_\alpha(z)dt\lesssim\|D_\varphi-D_\psi\|^q\|f\|_{T^p_2(\{z_k\})}^q.\nonumber
\endr
Using Fubini's theorem and   Khinchine's inequality we get
\begr
I
&=&\int_{\D}\left(\sum_{k=1}^\infty |f(z_k)|^2\left|\frac{(1-|z_k|)^{\lambda}}{(1-\overline{z_k}\varphi(z))^{\lambda+1}}-\frac{(1-|z_k|)^{\lambda}}{(1-\overline{z_k}\psi(z))^{\lambda+1}}\right|^2 \right)^{\frac{q}{2}}dA_\alpha(z)\nonumber\\
&\lesssim&\|D_\varphi-D_\psi\|^q\|f\|_{T^p_2(\{z_k\})}^q. \label{I}
\endr
Replace now $S_\lambda(f)$ by $S_{\lambda,N}(f)$, we have
{\small \begr
&&II=\int_{\D}\left(\sum_{k=1}^\infty |f(z_k)|^2\left|\frac{(1-|z_k|)^{\lambda}}{(1-t_{k,N}\overline{z_k}\varphi(z))^{\lambda+1}}-\frac{(1-|z_k|)^{\lambda}}{(1-t_{k,N}\overline{z_k}\psi(z))^{\lambda+1}}\right|^2 \right)^{\frac{q}{2}}dA_\alpha(z)\nonumber\\
&\lesssim&\|D_\varphi-D_\psi\|^q\|f\|_{T^p_2(\{z_k\})}^q. \label{II}
\endr}
Now, for any fixed $r_0\in(0,1)$, by Lemma \ref{KE},
\begr
& & I+II \nonumber \\
&\gtrsim &\int_{\D}\Bigg(\sum_{k=1}^\infty |f(z_k)|^2\Big(\big|\frac{(1-|z_k|)^{\lambda}}{(1-\overline{z_k}\varphi(z))^{\lambda+1}}-\frac{(1-|z_k|)^{\lambda}}{(1-\overline{z_k}\psi(z))^{\lambda+1}}\big|^2
\nonumber\\& &~~~~~\mbox{~~~}~~~~~~~~+\big|\frac{(1-|z_k|)^{\lambda}}{(1-t_{k,N}\overline{z_k}\varphi(z))^{\lambda+1}}-\frac{(1-|z_k|)^{\lambda}}{(1-t_{k,N}\overline{z_k}\psi(z))^{\lambda+1}}\big|^2\Big) \Bigg)^{\frac{q}{2}}dA_\alpha(z)\nonumber\\
&\gtrsim& \int_{\D}\left(\sum_{k=1}^\infty |f(z_k)|^2\frac{\rho(z)^2\big(\chi_{\varphi^{-1}(\triangle(z_k,r_0))}(z)+\chi_{\psi^{-1}(\triangle(z_k,r_0))}(z)\big)}{(1-|z_k|)^2}\right)^{\frac{q}{2}}dA_\alpha(z).  \nonumber
\endr
If $q\geq2$, using the inequality $\sum_{j}a_j^x\leq\left(\sum_{j}a_j\right)^x$, valid for all $a_j\geq0$ and $x\geq1$, and if $0<q<2$, by H\"older's inequality and the fact that
there exists a constant $K$ such that every $z\in\D$ belongs to at most $K$ discs  $\Delta(z_k, r_0)$,
we get
\begr
&&
  I+II \nonumber \\
&\gtrsim &\int_{\D}\sum_{k=1}^\infty |f(z_k)|^q\frac{\rho(z)^q\big(\chi_{\varphi^{-1}(\triangle(z_k,r_0))}(z)+\chi_{\psi^{-1}(\triangle(z_k,r_0))}(z)\big)^{\frac{q}{2}}}{(1-|z_k|)^{q}}dA_\alpha(z)\nonumber\\
&\gtrsim&\sum_{k=1}^\infty |f(z_k)|^q\frac{\mu(\triangle(z_k,r_0))}{(1-|z_k|)^{q}}.\nonumber
\endr
Therefore
\begr
&&\sum_{k=1}^\infty |f(z_k)|^q\frac{\mu(\triangle(z_k,r_0))}{(1-|z_k|)^{q}}\lesssim\|D_\varphi-D_\psi\|^q\|f\|_{T^p_2(\{z_k\})}^q\nonumber\\
&=&\|D_\varphi-D_\psi\|^q\left(\int_{\partial\D}\left(\left(\sum_{z_k\in\Gamma(\zeta)}(|f(z_k)|^q)^{\frac{2}{q}}\right)^{\frac{q}{2}}\right)^{\frac{p}{q}}dm(\zeta)\right)^{\frac{q}{p}}.\label{IAII}
\endr

On the other hand, by (\ref{A}), (\ref{I1}), (\ref{I2}) and Lemma \ref{Cs3}, we have

\begr
&&
\|(D_\varphi-D_\psi)f\|_{A^q_\alpha}^q\nonumber\\
&\lesssim&\int_{\D}|f^\prime(w)|^q\mu(\Delta(w,r))dh(w)\nonumber\\
&=&\int_{\D}\left(|f^\prime(w)|(1-|w|^2)\right)^q\frac{\mu(\Delta(w,r))}{(1-|w|^2)^{1+q}}(1-|w|)dh(w),\label{T2}
\endr
for any $0<r<1$.\msk

We now treat different cases separately.\msk
\par (i) It is trivial that $(ib)\implies(ia)$. Now, we prove that $(ia) \implies (ic)$. Since $0<q<\min\{2,p\}$, $s=\frac{p}{q}>1$ and $t=\frac{2}{q}>1$,    Lemma \ref{dual} yields $T^s_t(\{z_k\})^\star\simeq T^{s^\prime}_{t^\prime}(\{z_k\})$. Therefore, by (\ref{IAII}) we get
\begr
\left(\int_{\partial\D}\left(\sum_{z_k\in\Gamma(\zeta)}\big(\frac{\mu(\Delta(z_k,r_0))}{(1-|z_k|^2)^{1+q}}\big)^{\frac{2}{2-q}}\right)^{\frac{2-q}{2}\frac{p}{p-q}}dm(\zeta)\right)^{\frac{p-q}{p}}\lesssim\|D_\varphi-D_\psi\|^q.\nonumber
\endr
Using (\ref{R}) and the fact that $\Delta(z,r)\subset\Delta(z_k,r_0)$ for all $z\in\Delta(z_k,\delta)$ and all $k$, where $\frac{r+\delta}{1+r\delta}\leq r_0<1$, we get
\begr
&&\int_{\Gamma(\zeta)}\left(\frac{\mu(\Delta(z,r))}{(1-|z|^2)^{1+q}}\right)^{\frac{2}{2-q}}\frac{dA(z)}{(1-|z|)^2}\nonumber\\
&\leq&\sum_{k:\Delta(z_k,\delta)\cap\Gamma(\zeta)\neq\emptyset}\int_{\Delta(z_k,\delta)}
\left(\frac{\mu(\Delta(z,r))}{(1-|z|^2)^{1+q}}\right)^{\frac{2}{2-q}}\frac{dA(z)}{(1-|z|)^2}\nonumber\\
&\lesssim&\sum_{z_k\in\Gamma_\gamma(\zeta)}\left(\frac{1}{(1-|z_k|^2)^{1+q}}\right)^{\frac{2}{2-q}}\int_{\bigtriangleup(z_k,\delta)}\left(\mu(\Delta(z,r))\right)^{\frac{2}{2-q}}\frac{dA(z)}{(1-|z|)^2}\nonumber\\
&\lesssim&\sum_{z_k\in\Gamma_\gamma(\zeta)}\left(\frac{\mu(\Delta(z_k,r_0))}{(1-|z_k|^2)^{1+q}}\right)^{\frac{2}{2-q}},  \nonumber
\endr
and hence
\begr
&&\left(\int_{\partial\D}\Big(\int_{\Gamma(\zeta)}\big(\frac{\mu(\Delta(z,r))}{(1-|z|^2)^{1+q}}\big)^{\frac{2}{2-q}}\frac{dA(z)}{(1-|z|)^2}\Big)^{\frac{2-q}{2}\frac{p}{p-q}}dm(\zeta)\right)^{\frac{p-q}{p}}\nonumber\\
&\lesssim&\|D_\varphi-D_\psi\|^q.\nonumber
\endr
Then the assertion follows.

 Finally, let us prove that $(ic) \implies (ib)$. Let $ f_n \in H^p$ such that $\|f_n\|_{H^p}\leq1$ and $f_n\to0$ uniformly on compact subsets of $\D$.
 It is enough to show that
\begr\lim_{n\to\infty}\|(D_\varphi-D_\psi)f_n\|_{A^q_\alpha}=0.\label{C}\endr
 Denote $$F_n(w)=|f_n^\prime(w)|^q(1-|w|^2)^q, ~~dh_{R}=dh\chi_{\{R<|z|<1\}}, 0\leq R<1$$ and $\Phi_{\mu}(w)=\frac{\mu(\Delta(w,r))}{(1-|w|)^{1+q}}$. Fix $\epsilon>0$. Since $\Phi_{\mu}\in T_{\frac{2}{2-q}}^{\frac{p}{p-q}}(h)$, by the dominated convergence theorem there exists $R_0$ such that
 $$\sup_{R\geq R_0}\|\Phi_{\mu}\|_{T_{\frac{2}{2-q}}^{\frac{p}{p-q}}(h_R)}<\epsilon^q.$$
 Next, choose $k_0$ such that
 $$\sup_{n\geq k_0}\sup_{|z|\leq R_0}|f_n^\prime(z)|\leq\epsilon.$$
 Then, bearing in mind (\ref{T2}), using Lemma \ref{dual} and the inequality $$\|F_n\|_{T^{\frac{p}{q}}_{\frac{2}{q}}(h)}\lesssim\|f_n\|_{H^p}^q,$$
  we get
 \begr
&&
\|(D_\varphi-D_\psi)f_n\|_{A^q_\alpha}^q\nonumber\\
&\lesssim&\epsilon^q\int_{R_0\D}(1-|w|)^q\Phi_{\mu}(w)(1-|w|)dh(w)\nonumber\\
&~~&+\int_{\D}F_n(w)\Phi_{\mu}(w)(1-|w|)dh_{R_0}(w)\nonumber\\
&\lesssim&\epsilon^q\left\langle(1-|w|)^q,\Phi_{\mu}\right \rangle_{T^2_2(h)}+\left\langle F_n, \Phi_{\mu}\right \rangle_{T^2_2(h_{R_0})}\nonumber\\
&\lesssim&\epsilon^q\|(1-|w|)^q\|_{T^{\frac{p}{q}}_{\frac{2}{q}}(h)}\|\Phi_{\mu}\|_{T_{\frac{2}{2-q}}^{\frac{p}{p-q}}(h)}+\|F_n\|_{T^{\frac{p}{q}}_{\frac{2}{q}}(h)} \|\Phi_{\mu}\|_{T_{\frac{2}{2-q}}^{\frac{p}{p-q}}(h_{R_0})}\nonumber\\
&\lesssim&\epsilon^q.\nonumber
\endr
Letting $\epsilon\to0$, we get the desired result.\msk

\par (ii) (iia) Suppose that $D_\varphi-D_\psi:H^p\to A^q_\alpha$ is bounded.
Since $0<q=p<2$, then $s=\frac{p}{q}=1$ and $t=\frac{2}{p}>1$. So by Lemma \ref{dual}, $(T^1_t(\{z_k\}))^*\simeq T^\infty_{t^\prime}(\{z_k\})$ with the equivalence   norms. Therefore (\ref{IAII}) yields
\begr&&\sup_{\zeta\in\partial\D}\sup_{a\in\Gamma(\zeta)}\left(\frac{1}{|I_a|}\sum_{z_k\in Q(a)}\left(\frac{\mu(\Delta(z_k,r_0))}{(1-|z_k|)^{1+p}}\right)^{\frac{2}{2-p}}(1-|z_k|)\right)^{\frac{2-p}{2}}\nonumber\\
&\lesssim&\|D_\varphi-D_\psi\|^q,\nonumber\endr
that is
\begr
\sup_{a\in\D}\left(\frac{1}{|I_a|}\sum_{z_k\in Q(a)}\left(\frac{\mu(\Delta(z_k,r_0))}{(1-|z_k|)^{1+p}}\right)^{\frac{2}{2-p}}|I_{z_k}|\right)^{\frac{2-p}{2}}\lesssim\|D_\varphi-D_\psi\|^q.\label{All}
\endr
The point $z\in\D$ for which $Q(a)\cap\Delta(z,r)\neq\emptyset$ is contained in some $Q(a^\prime)$, where $\arg a=\arg a^\prime$ and $1-|a^\prime|\asymp1-|a|$, for all $|a|>R$, where $R=R(r)\in(0,1)$.
 Then
\begr
&&\int_{Q(a)}\left(\frac{\mu(\Delta(z,r))}{(1-|z|^2)^{1+p}}\right)^{\frac{2}{2-p}}\frac{dA(z)}{1-|z|}\nonumber\\
&\leq&\sum_{k:\Delta(z_k,\delta)\cap Q(a)\neq\emptyset}\int_{\Delta(z_k,\delta)}\left(\frac{\mu(\Delta(z,r))}{(1-|z|^2)^{1+p}}\right)^{\frac{2}{2-p}}\frac{dA(z)}{1-|z|}\nonumber\\
&\lesssim&\sum_{z_k\in Q(a^\prime)}\left(\frac{1}{(1-|z_k|^2)^{1+p}}\right)^{\frac{2}{2-p}}\int_{\Delta(z_k,\delta)}\left(\mu(\Delta(z,r))\right)^{\frac{2}{2-p}}\frac{dA(z)}{1-|z|}\nonumber\\
&\lesssim&\sum_{z_k\in Q(a^\prime)}\left(\frac{\mu(\Delta(z_k,r_0))}{(1-|z_k|^2)^{1+p}}\right)^{\frac{2}{2-p}}(1-|z_k|),\nonumber
\endr
where $r_0\in(0,1)$ sufficiently large such that $\Delta(z,r)\subset\Delta(z_k,r_0)$ for all $z\in\Delta(z_k,\delta)$ and all $k$,
and hence
\begr
\left(\frac{1}{|I_a|}\int_{Q(a)}\left(\frac{\mu(\Delta(z,r))}{(1-|z|^2)^{1+p}}\right)^{\frac{2}{2-p}}\frac{dA(z)}{1-|z|}\right)^{\frac{2-p}{2}}
\lesssim\|D_\varphi-D_\psi\|^q\nonumber
\endr
for any $|a|>R$. For $|a|\leq R$, by (\ref{All})
\begr
&&\frac{1}{|I_a|}\int_{Q(a)}\left(\frac{\mu(\Delta(z,r))}{(1-|z|^2)^{1+p}}\right)^{\frac{2}{2-p}}\frac{dA(z)}{1-|z|}\nonumber\\
&\lesssim&\int_{\D}\left(\frac{\mu(\Delta(z,r))}{(1-|z|^2)^{1+p}}\right)^{\frac{2}{2-p}}\frac{dA(z)}{1-|z|}\nonumber\\
&\leq&\sum_{k=1}^\infty\int_{\Delta(z_k,\delta)}\left(\frac{\mu(\Delta(z,r))}{(1-|z|^2)^{1+p}}\right)^{\frac{2}{2-p}}\frac{dA(z)}{1-|z|}\nonumber\\
&\lesssim&\sum_{k=1}^\infty\left(\frac{\mu(\Delta(z_k,r_0))}{(1-|z_k|^2)^{1+p}}\right)^{\frac{2}{2-p}}(1-|z_k|)\nonumber\\
&\lesssim&\|D_\varphi-D_\psi\|^{\frac{2p}{2-p}}, \nonumber
\endr
and then we get that $\left(\frac{\mu(\Delta(z,r))}{(1-|z|^2)^{1+p}}\right)^{\frac{2}{2-p}}\frac{dA(z)}{1-|z|}$ is a Carleson measure.

On the other hand, denote $$F(w)=|f^\prime(w)|^q(1-|w|^2)^q, ~~~~\Phi_{\mu}(w)=\frac{\mu(\Delta(w,r))}{(1-|w|)^{1+q}}.$$
 Then $F\in T^{1}_{\frac{2}{p}}(h)$ and $\|F\|_{T^{1}_{\frac{2}{p}}(h)}\lesssim\|f\|_{H^p}^p$.
Therefore, by (\ref{T2}) and Lemma \ref{dual} we have
\begr
&&\|(D_\varphi-D_\psi)f\|_{A^q_\alpha}^q\nonumber\\
&\lesssim&\int_{\D}\left(|f^\prime(w)|(1-|w|^2)\right)^q\frac{\mu(\Delta(w,r))}{(1-|w|^2)^{1+q}}(1-|w|)dh(w)\nonumber\\
&=&\left\langle F,\Phi_\mu\right \rangle_{T^2_2(h)}\lesssim\|\Phi_\mu\|_{T_{\frac{2}{2-p}}^{\infty}(h)}\|f\|_{H^p}^p\nonumber\\
&\lesssim&\sup_{a\in\D}\left(\frac{1}{|I(a)|}\int_{Q(a)}\left(\frac{\mu(\Delta(z,r))}{(1-|z|^2)^{1+p}}\right)^{\frac{2}{2-p}}
\frac{dA(z)}{1-|z|}\right)^{\frac{2-p}{2}}\cdot\|f\|_{H^p}^p.\nonumber
\endr
Then the assertion follows.\msk

(iib)
Let us first consider the sufficiency. Let $\{f_n\}$ be in $H^p$ such that $\|f_n\|_{H^p}\leq1$ and $f_n\to0$ uniformly on compact subsets of $\D$ when $n\rightarrow\infty$. A standard argument shows that
$$\lim_{|a|\to1^-}\frac{1}{|I(a)|}\int_{Q(a)}\Phi_\mu(z)^{\frac{2}{2-p}}(1-|z|)dh(z)=0$$
if and only if
\begr
&&\lim_{R\to1^-}\sup_{a\in\D}\frac{1}{|I(a)|}\int_{Q(a)}\Phi_\mu(z)^{\frac{2}{2-p}}(1-|z|)dh_R(z)\nonumber\\
&=& \lim_{R\to1^-}\|\Phi_\mu\|_{T_{\frac{2}{2-p}}^{\infty}(h_R)}=0.
\endr
So for a fixed $\epsilon>0$, there exists $R_0$ such that
 $$\sup_{R\geq R_0}\|\Phi_{\mu}\|_{T_{\frac{2}{2-p}}^{\infty}(h_R)}<\epsilon^q.$$
 Let $k_0$ be such that
 $\sup_{n\geq k_0}\sup_{|z|\leq R_0}|f_n^\prime(z)|\leq\epsilon.$
 Then, bearing in mind (\ref{T2}) and the inequality $\|F_n\|_{T^{1}_{\frac{2}{q}}(h)}\lesssim\|f_n\|_{H^p}^q$, we get
 \begr
&&
\|(D_\varphi-D_\psi)f_n\|_{A^q_\alpha}^q\nonumber\\
&\lesssim&\epsilon^q\int_{R_0\D}(1-|w|)^q\Phi_{\mu}(w)(1-|w|)dh(w)\nonumber\\
&~~&+\int_{\D}F_n(w)\Phi_{\mu}(w)(1-|w|)dh_{R_0}(w)\nonumber\\
&\lesssim&\epsilon^q\left\langle(1-|w|)^q,\Phi_{\mu}\right \rangle_{T^2_2(h)}+\left\langle F_n, \Phi_{\mu}\right \rangle_{T^2_2(h_{R_0})}\nonumber\\
&\lesssim&\epsilon^q\|(1-|w|)^q\|_{T^{1}_{\frac{2}{q}}(h)}\|\Phi_{\mu}\|_{T_{\frac{2}{2-q}}^{\infty}(h)}+\|F_n\|_{T^{1}_{\frac{2}{q}}(h)} \|\Phi_{\mu}\|_{T_{\frac{2}{2-q}}^{\infty}(h_{R_0})}\nonumber\\
&\lesssim&\epsilon^q.\nonumber
\endr
Therefore, $D_\varphi-D_\psi:H^p\to A^q_\alpha$ is compact.\msk

Next, we prove the necessity.
Since $D_\varphi-D_\psi:H^p \to A^q_\alpha$ is compact, the set $(D_\varphi-D_\psi)\circ S_\lambda(\mathcal{C}T^p_2\{z_k\})$ is relatively compact in $A^q_\alpha$.
Consider $f\in T^p_2\{z_k\}$ with $\|f\|_{T^p_2\{z_k\}}=1$.
We can choose $0<R_0<1$ such that
\begr
\int_{\D\backslash R_0\D}\left|(D_\varphi-D_\psi)S_{\lambda}(f)(z)\right|^qdA_\alpha(z)\lesssim\epsilon^q.\nonumber
\endr
Let $g_t\in T^p_2(\{z_k\})$, such that $g_t(z_k)=f(z_k)r_k(t)$, where $r_k$ denotes the $k$th Rademacher function,  replace now $f(z_k)$ by $g_t(z_k)$ in the above inequality, and integrate with respect to $t$ we obtain
\begr
\int_0^1\int_{\D\backslash R_0\D}\left|(D_\varphi-D_\psi)S_{\lambda}(g_t)(z)\right|^qdA_\alpha(z)dt\lesssim\epsilon^q.\nonumber
\endr
Using Fubini's theorem and   Khinchine's inequality we get
\begin{eqnarray*}
I
=\int_{\D\backslash R_0\D}\bigg(\sum_{k=1}^\infty |f(z_k)|^2\big|\frac{(1-|z_k|)^{\lambda}}{(1-\overline{z_k}\varphi(z))^{\lambda+1}} -\frac{(1-|z_k|)^{\lambda}}{(1-\overline{z_k}\psi(z))^{\lambda+1}}\big|^2 \bigg)^{\frac{q}{2}}dA_\alpha(z)
\lesssim\epsilon^q.
\end{eqnarray*}
Replace now $S_\lambda(f)$ by $S_{\lambda,N}(f)$, we have
\begin{eqnarray*} II
=\int_{\D\backslash R_0\D}\bigg(\sum_{k=1}^\infty |f(z_k)|^2\big|\frac{(1-|z_k|)^{\lambda}}{(1-t_{k,N}\overline{z_k}\varphi(z))^{\lambda+1}} -\frac{(1-|z_k|)^{\lambda}}{(1-t_{k,N}\overline{z_k}\psi(z))^{\lambda+1}}\big|^2 \bigg)^{\frac{q}{2}}dA_\alpha(z)
\lesssim\epsilon^q.
\end{eqnarray*}
Now, for any fixed $r_0\in(0,1)$, by Lemma \ref{KE},
\begin{eqnarray*}
&&I+II \nonumber\\
&\gtrsim &\int_{\D\backslash R_0\D}\Bigg(\sum_{k=1}^\infty |f(z_k)|^2\bigg(\big|\frac{(1-|z_k|)^{\lambda}}{(1-\overline{z_k}\varphi(z))^{\lambda+1}}-\frac{(1-|z_k|)^{\lambda}}{(1-\overline{z_k}\psi(z))^{\lambda+1}}\big|^2\nonumber\\
& &~~~~~\mbox{~~~}~~~ ~~~+\big|\frac{(1-|z_k|)^{\lambda}}{(1-t_{k,N}\overline{z_k}\varphi(z))^{\lambda+1}}-\frac{(1-|z_k|)^{\lambda}}{(1-t_{k,N}\overline{z_k}\psi(z))^{\lambda+1}}\big|^2\bigg) \Bigg)^{\frac{q}{2}}dA_\alpha(z)\nonumber\\
&\gtrsim &\int_{\D\backslash R_0\D}\big(\sum_{k=1}^\infty |f(z_k)|^2\frac{\rho(z)^2\big(\chi_{\varphi^{-1}(\triangle(z_k,r_0))}(z)+\chi_{\psi^{-1}(\triangle(z_k,r_0))}(z)\big)}{(1-|z_k|)^2}\big)^{\frac{q}{2}}dA_\alpha(z)\nonumber\\
&\gtrsim &\int_{\D\backslash R_0\D}\sum_{k=1}^\infty |f(z_k)|^q\frac{\rho(z)^q\big(\chi_{\varphi^{-1}(\triangle(z_k,r_0))}(z)+\chi_{\psi^{-1}(\triangle(z_k,r_0))}(z)\big)}{(1-|z_k|)^{q}}dA_\alpha(z)\nonumber\\
&= &\sum_{k=1}^\infty |f(z_k)|^q\frac{\mu(\triangle(z_k,r_0)\cap (\D\backslash R_0\D))}{(1-|z_k|)^{q}}.\nonumber
\end{eqnarray*}
Therefore
\begr
&&\sum_{k=1}^\infty |f(z_k)|^q\frac{\mu(\triangle(z_k,r_0)\cap (\D\backslash R_0\D))}{(1-|z_k|)^{q}}\lesssim\epsilon^q. \label{IAIIe}
\endr
Since $0<q=p<2$, then $s=\frac{p}{q}=1$ and $t=\frac{2}{p}>1$,   by Lemma \ref{dual}, $T^1_t(\{z_k\})\simeq T^\infty_{t^\prime}(\{z_k\})$ with equivalence of norms. Hence
 $$\sup_{\zeta\in\partial\D}\sup_{a\in\Gamma(\zeta)}\left(\frac{1}{|I(a)|}\sum_{z_k\in Q(a)}\left(\frac{\mu(\Delta(z_k,r_0)\cap \D\backslash R_0\D)}{(1-|z_k|)^{1+p}}\right)^{\frac{2}{2-p}}(1-|z_k|)\right)^{\frac{2-p}{2}}\lesssim\epsilon^q.
$$
Using the fact that the point $z\in\D$ for which $Q(a)\cap\Delta(z,r)\neq\emptyset$ is contained in some $Q(a^\prime)$, where $\arg a=\arg a^\prime$ and $1-|a^\prime|\asymp1-|a|$, for all $|a|>R$, where $R=R(r)\in(0,1)$, yields
\begr
&&\int_{Q(a)}\left(\frac{\mu(\Delta(z,r)\cap (\D\backslash R_0\D))}{(1-|z|^2)^{1+q}}\right)^{\frac{2}{2-q}}\frac{dA(z)}{1-|z|}\nonumber\\
&\leq&\sum_{k:\Delta(z_k,\delta)\cap Q(a)\neq\emptyset}\int_{\Delta(z_k,\delta)}\left(\frac{\mu(\Delta(z,r)\cap (\D\backslash R_0\D))}{(1-|z|^2)^{1+q}}\right)^{\frac{2}{2-q}}\frac{dA(z)}{1-|z|}\nonumber\\
&\lesssim&\sum_{z_k\in Q(a^\prime)}\left(\frac{1}{(1-|z_k|^2)^{1+q}}\right)^{\frac{2}{2-q}}\int_{\Delta(z_k,\delta)}\left(\mu(\Delta(z,r)\cap (\D\backslash R_0\D))\right)^{\frac{2}{2-q}}\frac{dA(z)}{1-|z|}\nonumber\\
&\lesssim&\sum_{z_k\in Q(a^\prime)}\left(\frac{\mu(\Delta(z_k,r_0)\cap (\D\backslash R_0\D))}{(1-|z_k|^2)^{1+q}}\right)^{\frac{2}{2-q}}(1-|z_k|),\nonumber
\endr
where $r_0\in(0,1)$ sufficiently large such that $\Delta(z,r)\subset\Delta(z_k,r_0)$ for all $z\in\Delta(z_k,\delta)$ and all $k$. Hence
\begr
\sup_{|a|>R}\frac{1}{|I(a)|}\int_{Q(a)}\left(\frac{\mu(\Delta(z,r)\cap (\D\backslash R_0\D))}{(1-|z|^2)^{1+p}}\right)^{\frac{2}{2-p}}\frac{dA(z)}{1-|z|}
\lesssim\epsilon^q.\nonumber
\endr
It is easy to see that $\Delta(z,r)\cap \D\backslash R_0\D=\Delta(z,r),$
for any $z\in Q(a)$, when $a$ close enough to $1$. Therefore,
\begr
\lim_{|a|\to1^-}\frac{1}{|I(a)|}\int_{Q(a)}\left(\frac{\mu(\Delta(z,r))}{(1-|z|^2)^{1+p}}\right)^{\frac{2}{2-p}}\frac{dA(z)}{1-|z|}
=0,\nonumber
\endr
which completes the proof of the necessity.\msk

  (iii). We only give a detail proof for (iiib) since the  proof of  (iiia) can be proved by standard modifications of the proof of   (iiib) and hence we omitted it.\msk

 (iiib)   Suppose that $D_\varphi-D_\psi:H^p\to A^q_\alpha$ is compact. Since $2\leq q<p$, we have $s=\frac{p}{q}>1$ and $t=\frac{2}{p}<1$. So by Lemma \ref{dual2}, $(T^s_t(\{z_k\}))^*\simeq T^{s^\prime}_{\infty}(\{z_k\})$ with equivalence of norms. Therefore, (\ref{IAIIe}) yields
 \begr
\int_{\partial\D}\left(\sup_{z\in\Gamma(\zeta)}\left(\frac{\mu(\Delta(z,r)\cap (\D\backslash R_0\D))}{(1-|z|^2)^{1+q}}\right)^{\frac{p}{p-q}}\right)dm(\zeta)\lesssim\epsilon^q.\nonumber
\endr
There is a $0<R<1$ such that $\Delta(z,r)\cap \D\backslash R_0\D=\Delta(z,r),$
for any $|z|\geq R$. Thus
 $$\lim_{R\to1}\int_{\partial\D}\left(\sup_{z\in\Gamma(\zeta)\cap \{|z|\geq R\}}\left(\frac{\mu(\Delta(z,r))}{(1-|z|^2)^{1+q}}\right)^{\frac{p}{p-q}}\right)dm(\zeta)=0.$$

  Now, we prove the necessity. Let $\{f_n\}$ be in $H^p$ such that $\|f_n\|_{H^p}\leq1$ and $f_n\to0$ uniformly on compact subsets of $\D$. We can assume that $f_n(0)=0$.
 It is enough to show that
\begr\lim_{n\to\infty}\|(D_\varphi-D_\psi)f_n\|_{A^q_\alpha}=0.\label{C}\endr
  Fix $\epsilon>0$. There exists $R_0$ such that
 \begr\label{min}
\left(\int_{\partial\D}\left(\sup_{z\in\Gamma(\zeta)\cap \{|z|\geq R_0\}}\left(\frac{\mu(\Delta(z,r))}{(1-|z|^2)^{1+q}}\right)^{\frac{p}{p-q}}\right)dm(\zeta)\right)^{\frac{p-q}{p}}
<\epsilon^q.
\endr
 Next, choose $k_0$ such that
 \begr
 \sup_{n\geq k_0}\sup_{|z|\leq R_0}|f_n^\prime(z)|\leq\epsilon.\label{D}
 \endr
 Denote $F_n(w)=|f_n^\prime(w)|^q(1-|w|^2)^q$ and $\Phi_{\mu}(w)=\frac{\mu(\Delta(w,r))}{(1-|w|)^{1+q}}$.
Then, bearing in mind (\ref{T2}), we get   
 \begr
\|(D_\varphi-D_\psi)f_n\|_{A^q_\alpha}^q&\lesssim&\int_{\D}F_n(w)\Phi_{\mu}(w)(1-|w|)dh(w)\nonumber\\
&\asymp&\int_{\partial\D}\left(\int_{\Gamma(\zeta)}F_n(w)\Phi_{\mu}(w)dh(w)\right)dm(\zeta)\nonumber\\
&=&\int_{\partial\D}\left(\int_{\Gamma(\zeta)\cap\{|w|\leq R_0\}}F_n(w)\Phi_{\mu}(w)dh(w)\right)dm(\zeta)\nonumber\\
&~~&+\int_{\partial\D}\left(\int_{\Gamma(\zeta)\cap\{|w|>R_0 \}}F_n(w)\Phi_{\mu}(w)dh(w)\right)dm(\zeta)\nonumber\\
&:=&J_1+J_2.\nonumber
\endr

Since $\mu$ is a finite measure, by (\ref{D}),
 \begr
J_1\lesssim\epsilon^q\int_{\partial\D}\left(\int_{\Gamma(\zeta)\cap\{|w|\leq R_0\}}\mu(\Delta(w,r))\frac{dA(w)}{(1-|w|)^3}\right)dm(\zeta)\lesssim\epsilon^q.\nonumber
\endr
Let $\delta_1=\frac{2\delta}{1+\delta^2}$. Using the fact that $\Delta(z_k,\delta)\subset\Delta(w,\delta_1)$ for any $w\in\Delta(z_k,\delta)$ and (\ref{R}), we get
\begin{eqnarray*}
J_2&\leq&
\int_{\partial\D}\sum_{k:\Delta(z_k,\delta)\cap\Gamma(\zeta)\neq\emptyset}\int_{\Delta(z_k,\delta)\cap\{|w|> R_0\}}F_n(w)\Phi_{\mu}(w)dh(w)dm(\zeta)\nonumber\\
&\lesssim&
\int_{\partial\D}\big(\sup_{w\in\Gamma_\gamma(\zeta)\cap\{|w|> R_0\}}\Phi_{\mu}(w)\big)\cdot\big(\sum_{k:\Delta(z_k,\delta)\cap\Gamma(\zeta)\neq\emptyset}\sup_{w\in\Delta(z_k,\delta)}F_n(w)\big)dm(\zeta).\nonumber
\end{eqnarray*}
By Lemma \ref{L}, we get
\begr
\sup_{w\in\Delta(z_k,\delta)}F_n(w)\lesssim\int_{\Delta(z_k,\delta_1)}\Delta |f_n|^q(z)dA(z).\nonumber
\endr
Therefore, using H{\"o}lder's inequality and (\ref{min}), we have
 \begr
J_2
&\lesssim&
\left(\int_{\partial\D}\sup_{w\in\Gamma_\gamma(\zeta)\cap\{|w|> R_0\}}\Phi_{\mu}(w)^{s^\prime}dm(\zeta)\right)^{\frac{1}{s^\prime}}\nonumber\\
&~~~~&\cdot
\left(\int_{\partial\D}\left(\sum_{k:\Delta(z_k,\delta)\cap\Gamma(\zeta)\neq\emptyset}\int_{\Delta(z_k,\delta_1)}\Delta |f_n|^q(z)dA(z)\right)^{s}dm(\zeta)\right)^{\frac{1}{s}}\nonumber\\
&\lesssim&
\epsilon^q\left(\int_{\partial\D}\left(\sum_{k:\Delta(z_k,\delta)\cap\Gamma(\zeta)\neq\emptyset}\int_{\Delta(z_k,\delta_1)}\Delta |f_n|^q(z)dA(z)\right)^{s}dm(\zeta)\right)^{\frac{1}{s}}. \nonumber
\endr
Let $z\in\Delta(z_k,\delta)\cap\Gamma(\zeta)$. Then there exists $0<r_1<1$ depending only on $\delta$ such that
$\Delta(z_k,\delta_1)\subset\Delta(z,r_1)$ and
$$\bigcup_{k:\Delta(z_k,\delta)\cap\Gamma(\zeta)\neq\emptyset}\Delta(z_k,\delta_1)\subset\bigcup_{z\in\Gamma(\zeta)}\Delta(z,r_1)\subset\Gamma_\gamma(\zeta).$$
Meanwhile, there exists a positive integer $M$, such that for each $z\in\Gamma_\gamma(\zeta)$  belongs to at most $M$  pseudo-hyperbolic disks $\Delta(z_k,\delta_1)$.
Therefore, by Lemma \ref{eq},
 \begr
J_2
&\lesssim&
\epsilon^q\left(\int_{\partial\D}\left(\int_{\Gamma_\gamma(\zeta)}\Delta |f_n|^q(z)dA(z)\right)^{s}dm(\zeta)\right)^{\frac{1}{s}}\nonumber\\
&\lesssim&\epsilon^q\|f_n\|_{H^p}^q\lesssim\epsilon^q.\nonumber
\endr
\msk
Letting $\epsilon\to0$, we get (\ref{C}). The proof is complete.
\end{proof}

By combining Theorems \ref{Th2} and   \ref{Th3}, we give a complete proof for Theorem 1,  our main result in this paper.\\

\textbf{Acknowledgements}: This  project was partially supported by the NNSF of China (No. 11901271) and a grant of Lingnan Normal University (No. 1170919634).\msk


\begin{thebibliography}{aa}

\bibitem{A}  M. Arsenovic, Embedding derivatives of M-harmonic functions into $L^p$ spaces, {\it Rocky Mt. J. Math.} {\bf 29}, (1999),  61--76.


\bibitem{CCKY} B. Choe, K. Choi, H. Koo and J. Yang, Difference of weighted composition operators, {\it J. Funct. Anal.} {\bf 278(5)}, (2020), 108401.

\bibitem{CMS} R. Coifman, Y. Meyer and M. Stein, Some new function spaces and their applications to harmonic analysis, {\it J. Funct. Anal.} {\bf 62}, (1985), 304--335.

\bibitem{Duren} P. Duren,  {\it Theory of $H^p$ Spaces}, Academic Press, 1970.


\bibitem{DS} P. Duren and A. Schuster, {\it Bergman Spaces}, AMS, 2004.

\bibitem{Ga} J. Garnett, {\it Bounded Analytic Functions}, Springer Science  Business Media, 2007.

\bibitem{G} T. Goebeler, Composition operators acting between Hardy spaces, {\it Integral Equations Operator Theory} {\bf 41 (4)} (2001), 389--395.

\bibitem{J} M. Jevtic, Embedding derivatives of M-harmonic Hardy spaces $H^p$ into Lebesgue spaces, $0<p<2$, {\it Rocky Mt. J. Math.} {\bf 26} (1996), 175--187.

\bibitem{KW} H. Koo and M. Wang, Joint Carleson measure and the difference of composition operators on $A^p_\a(\mathbb{B}_n)$,
 {\it J. Math. Anal. Appl.} {\bf 419} (2014), 1119--1142.

\bibitem{LS} M. Lindstr\"om and E. Saukko,  Essential norm of weighted composition operators and difference of composition operators
between standard weighted Bergman spaces,  {\it Complex Anal. Oper. Theory} {\bf 9(6)} (2015), 1411--1432.

\bibitem{LSS}  M. Lindstr\"om, E. Saukko and Y. Shi, Corrigendum to: An application of atomic decomposition in Bergman spaces to the study of differences of composition operators,  {\it J. Funct. Anal.} {\bf 281 (9)} (2021).

\bibitem{LRW}  B. Liu, J. R\"atty\"a and F. Wu, Compact differences of composition operators on Bergman spaces induced by doubling weights, {\it  J. Geom. Anal.}  (2021), 1--16.

\bibitem{L} D. Luecking, Embedding derivatives of Hardy spaces into Lebesgue spaces, {\it Proc. Lond. Math. Soc.} {\bf 63} (1991), 595--619.

\bibitem{Lu4} D. Luecking, Embedding theorems for spaces of analytic functions via Khinchine's inequality, {\it Michigan Math. J.}  {\bf 40(2)} (1993), 333--358.

\bibitem{Mo} J. Moorhouse, Compact differences of composition operators, {\it J. Funct. Anal.}  {\bf 219} (2005), 70--92.

\bibitem{NS} P. Nieminen and E. Saksman, On compactness of the difference of composition operators, {\it J. Math. Anal. Appl.} {\bf  298(2)}(2004),   501--522.

\bibitem{Pau}J. Pau, Integration operators between Hardy spaces on the unit ball of $\C^n$,  {\it J. Funct. Anal.}  {\bf 270(1)} (2016), 134--176.

\bibitem{Pa} M. Pavlovi\'{c}, On the Littlewood-Paley $g$-function and Calder\'{o}n's area theorem, {\it Expo. Math.} {\bf 31(2)} (2013), 169--195.

\bibitem{P} J. Pel\'{a}ez, Compact embedding derivatives of Hardy spaces into Lebesgue spaces, {\it Proc. Amer.
Math. Soc.}  {\bf 144(3)}  (2016), 1095--1117.

\bibitem{PR} J. Pel\'{a}ez and  J. R\"atty\"a, Embedding theorems for Bergman spaces via harmonic analysis, {\it Math. Ann.}
{\bf 362}  (2015), 205--239.

\bibitem{S1} E. Saukko, Difference of composition operators between standard weighted Bergman spaces,  {\it J. Math. Anal. Appl.} {\bf 381(2)} (2011), 789--798.

\bibitem{S2} E. Saukko,  An application of atomic decomposition in Bergman spaces to the study of differences of composition operators, {\it J. Funct. Anal.} {\bf 262(9)} (2012), 3872--3890.

\bibitem{SS} J. Shapiro and C. Sundberg, Isolation amongst the composition operators, {\it Pacific J. Math.}  {\bf 145} (1990), 117--152.

\bibitem{SL} Y. Shi and S. Li, Difference of composition operators between different Hardy spaces, {\it J. Math. Anal. Appl.} {\bf 467(1)} (2018), 1--14.

\bibitem{SLD} Y. Shi, S. Li and J. Du, Difference of composition operators between weighted Bergman spaces on the unit ball, arXiv:1903.00651.

\bibitem{SQL} Y. Shi, D. Qu and S. Li, Difference of composition operators on weighted Bergman spaces with doubling weights, {\it Comput. Meth. Funct. Theory}, (2021), 1--19.

\bibitem{Zhu} K. Zhu,  {\it Spaces of Holomorphic Functions in the Unit Ball}, Vol. 226. Springer Science   Business Media, 2005.

\bibitem{zhu}  K.~Zhu,  {\it Operator Theory in Function Spaces},  American Mathematical Society, Providence, RI, 2007.


\end{thebibliography}
\end{document}